\newtheorem{theorem}{Theorem}[section]
\newtheorem{corollary}[theorem]{Corollary}
\newtheorem{lemma}[theorem]{Lemma}
\newtheorem{proposition}[theorem]{Proposition}
\newtheorem{remark}[theorem]{Remark}
\newenvironment{proof}[1][Proof]{\textbf{#1.} }{\hfill\rule{0.5em}{0.5em}}
{\catcode`\@=11\global\let\AddToReset=\@addtoreset
	\AddToReset{equation}{section}
	
	\AddToReset{theorem}{section}

	
\begin{document}
		\title{Nonlinear Landau damping for the 2d Vlasov-Poisson system with massless electrons around Penrose-stable equilibria}
		\author{
			{\bf Lingjia Huang\thanks{E-mail address: ljhuang20@fudan.edu.cn, Fudan University, 220 Handan Road, Yangpu, Shanghai, 200433, China.}, Quoc-Hung Nguyen\thanks{E-mail address: qhnguyen@amss.ac.cn, Academy of Mathematics and Systems Science,
					Chinese Academy of Sciences,
					Beijing 100190, PR China.} ~and~Yiran Xu\thanks{E-mail address: yrxu20@fudan.edu.cn, Fudan University, 220 Handan Road, Yangpu, Shanghai, 200433, China. }}}
		\date{}  
		\maketitle
		\begin{abstract}
			In this paper, we prove the nonlinear asymptotic stability of the Penrose-stable equilibria among solutions of the $2d$ Vlasov-Poisson system with massless electrons.
		\end{abstract}
		\section{Introduction}
	This paper is devoted to study the Vlasov-Poisson system of the form:
		\begin{equation}\label{toymodel}
			\begin{cases}
				\partial_t \mathbf{f} + v\cdot \nabla_x  \mathbf{f}+E\cdot\nabla_v  \mathbf{f}=0,\\
				E=-\nabla_xU,    -\Delta U+U =\varrho-1+A(U),~~ \varrho(t,x)=\int_{\mathbb{R}^2} \mathbf{f}(t,x,v)dv,\\
			\end{cases}
		\end{equation}
		on the whole space $x\in\mathbb{R}^2$, $v\in\mathbb{R}^2$, where $\mathbf{f}=\mathbf{f}(t,x,v)\ge 0$ is the probability distribution of charged particles in plasma, $\varrho(t,x)$ is the electric charge density, and  $E=E(t,x)$ is electric field and $A:\mathbb{R}\to \mathbb{R}$  is smooth and  satisfies  $A(r)=\mathcal{O}(r^2)$ as $r\to 0$. In particular, the system is for massless electrons or ions when $A(r)= r+1-e^r$.
		This system was extensively studied (\cite{arsenev,batt,EHorst1982,CBardos1985,FBouchut1991,DHanKwan2011,CBardos2018,KPfaffelmoser1992,JSchaeffer1991,EHorst1993,RTGlassey1996,LionsPL1991,HJHwang2011,SHCHoi2011,JSmulevici2016,Kwanlacobelli2017,Xwang2018,GriffinLacobelli,GuoPausader,Alonescwau2020,FlyOuPau}), focusing on the global existence, regularity results and  longtime behavior of solutions. We are interested in the asymptotic stability  of solutions $f_i$ to \eqref{toymodel}  in the form
$$\mathbf{f}(t,x,v)=\mu(v)+f(t,x,v)$$
		where $\mu(v)$ is a stable equilibrium with $\int_{\mathbb{R}^2}\mu(v)dv=1$ and $\mathbf{f}(t=0,x,v)$ closes to $\mu(v)$.  So, $f$ solves the following perturbed system
		\begin{equation}\label{eq2}
			\begin{cases}
				\partial_t f + v\cdot \nabla_x f +E\cdot\nabla_v \mu=-E\cdot\nabla_v f,\\
				E=-\nabla_xU,    -\Delta U+U =\rho+A(U),~~ \rho(t,x)=\int_{\mathbb{R}^2}f(t,x,v)dv,\\
				f|_{t=0}=f_0.
			\end{cases}
		\end{equation}
	The following are assumptions on $\mu$ and $A$ in this paper. 
	\begin{itemize}
		\item \textit{Assumption 1:}	$\mu$ satisfies the \textit{Penrose stability condition}:
		\begin{equation}\label{penrose}
			\inf_{\tau\in\mathbb{R},\xi\in\mathbb{R}^2}\left|1-\int_{0}^{+\infty}e^{-i\tau s}\frac{1}{1+|\xi|^2}i\xi\cdot\widehat{\nabla_v\mu}(s\xi)ds\right|\geq\text{\r{c}},
		\end{equation}
		for some constant $\text{\r{c}}>0$, $\widehat{\nabla_v\mu}$ is the Fourier transform of $\nabla_v\mu$ in $\mathbb{R}^2$.
		\item \textit{Assumption 2:} $\mu\in L^1$ satisfies 
		$$\|\langle \cdot\rangle^3\nabla_v\mu(\cdot)\|_{W^{2,\infty}}+\|\langle \cdot\rangle^{7}\nabla\mu(\cdot)\|_{W^{11,1}}\leq M^*,$$
		for some $M^*<\infty$.
		\item \textit{Assumption 3:}  $A:\mathbb{R}\to \mathbb{R}$  is $C^3$ and  satisfies  
		\begin{align*}
			\sup_{|r|\leq 1}	\left(\left|\frac{A(r)}{r^2}\right|+\left|\frac{A'(r)}{r}\right|+\left|{A''(r)}\right|+\left|{A^{(3)}(r)}\right|\right)\leq C_A,
		\end{align*}
		for some constant $C_A>0$.
	\end{itemize}
		Note that a particular example for the Assumption 3 is $A(r)=r+1-e^r$ corresponding to the Vlasov–Poisson system with electrons mass, see  \cite{FBouchut1991}.\\
	Under the Penrose condition \eqref{penrose}, Landau damping was studied in the breakthrough paper  \cite{CMouhot2011} by Mouhot and Villani in the case $\mathbb{T}^d\times\mathbb{R}^d$ with Gevrey or analytic data (see also \cite{JBedrossian2016,NguyenTT2020} for refinements and simplifications). In \cite{Bedrossiantunis} Bedrossian showed that the results therein do not hold in finite regularity (see also \cite{NguyenTT2020b})). We note that related mechanisms in the fluid are the vorticity mixing by shear flows   \cite{JbedIhes2015,ALonescu2020,Alonescu2020acta,Naderweiren}.
	In the whole space  $\mathbb{R}^d\times\mathbb{R}^d$ with $d\geq 3$, it was established for the screened Vlasov-Poisson system (i.e., $A(U)=0$) in \cite{JBedrossian2018} by Bedrossian, Masmoudi, and Mouhot with  Sobolev data. Their proof relies on the dispersive mechanism in Fourier space to control the plasma echo resonance. However, a decay in time of their result is far from optimal.  In \cite{HanKwanD2021}, Han-Kwan, T. Nguyen, and Rousset used pointwise dispersive estimates of the linearized system of \eqref{eq2} to obtain the decay estimates for the density $\rho$ as follows:
		\begin{equation}\label{x1}
		\sum_{j=0,1}\left[(1+t)^{j}\|\nabla^i \rho(t)\|_{L^1}+(1+t)^{d+j}\|\nabla^i\rho(t)\|_{L^\infty}\right]\lesssim\varepsilon_0 \log(t+2),~~\forall~t>0,
		\end{equation}
with $d\geq 3$. Note that \eqref{x1} is optimal up to a logarithmic correction. Also, higher derivatives for the density were established in \cite{NguyenTTr2020}. Note that the problem \eqref{eq2} in dimension $d=2$ is \textit{critical and open}. Recently, in \cite{AIonescu2022} Ionescu, Pausader, Wang, and Widmayer proved the first asymptotic stability result for the unscreened Vlasov-Poisson system (i.e., $A(U)=U$) in $\mathbb{R}^3$ around the Poisson equilibrium, see also in \cite{Pausader2021} for the case of a repulsive point charge. The unscreened case is open for the general equilibria. However, in \cite{HNRcmp,JBedrossianNa2020}  they studied the linearized unscreened Vlasov-Poisson equation around suitably stable analytic homogeneous equilibria in $\mathbb{R}^d_x \times \mathbb{R}^d_v$.\\
Very recently, authors in \cite{HNX1} established new estimates and cancellations of the kernel to the linearized problem ( see Proposition \ref{ProG}) and proved the sharp decay estimates for the density $\rho$ in Besov spaces:
\begin{align*}
	\sum_{j=0,1}\sum_{p=1,\infty}\left((1+t)^{j+\frac{d(p-1)}{p}}\|  \nabla^j\rho(t)\|_{L^p}+(1+t)^{j+a+\frac{d(p-1)}{p}}\|  \nabla^j\rho(t)\|_{\dot{B}^a_{p,\infty}}\right)\lesssim \varepsilon_0,~~\forall t>0,
\end{align*} with $d\geq 3$, 
for some $a\in (0,1)$. In particular, this implies \eqref{x1} without logarithmic correction. \vspace{0.2cm}\\
Our goal in this paper is to extend our work in  \cite{HNX1} to  dimension $d=2$.\vspace{0.2cm}\\
Before we state our theorem, we need to recall some notations in    \cite{HNX1}. 
For $\gamma\in (0,1)$ and $m\in \mathbb{N}$, and $g:[0,\infty)\times\mathbb{R}^2\to \mathbb{R}$, we define for $T>0$,
		$$
		\|g\|_{ m+\gamma,T}=\sum_{j=0}^m\sum_{p=1,\infty}\sup_{s\in [0,T]}\left(\langle s\rangle^{\frac{2(p-1)}{p}}\|  g(s)\|_{L^p}+\langle s\rangle^{j+\gamma+\frac{2(p-1)}{p}}\|  \nabla^jg(s)\|_{\dot{B}^\gamma_{p,\infty}}\right),$$
		with 
			$
		\|g\|_{\dot{B}_{p,\infty}^{s}}:=\sup_{\alpha}\frac{\|\delta_\alpha g(x)\|_{L^{p}_x}}{|\alpha|^s}.$ Moreover, $	\|g\|_{ m+\gamma}:=\|g\|_{ m+\gamma,T}$ when $T=\infty$.\vspace{0.15cm}\\
			For  $h_1:\mathbb{R}^2_x\times \mathbb{R}^2_v\to  \mathbb{R}^m$, we define for $(x,v)\in\mathbb{R}^2_x\times \mathbb{R}^2_v$ and $a\in (0,1)$,
			\begin{align*}
				&	\dot{\mathcal{D}}^a_1h_1(x,v)=\sup_{z\in\mathbb{R}^2}\frac{|h_1(x,v)-h_1(x-z,v)|}{|z|^a}, ~~	\dot{\mathcal{D}}^a_2h_1(x,v)=\sup_{z\in\mathbb{R}^2}\frac{|h_1(x,v)-h_1(x,v-z)|}{|z|^a},\\
				&
				\mathcal{D}^ah_1=|h_1|+\dot{\mathcal{D}}^a_1h_1+\dot{\mathcal{D}}^a_2h_1.
			\end{align*}
			For $h_2:\mathbb{R}^2_x\times \mathbb{R}^2_v\to  \mathbb{R}$ and $a\in (0,1)$ we define 
			\begin{equation}\label{zzz16}
				||| h_2|||_{1+a}:=	\sum_{p=1,\infty}\sum_{i=0,1}\left\|\mathcal{D}^{a} (\nabla_{x,v}^ih_2)\right\|_{L^1_{x}L^p_v\cap L^1_{v}L^p_x}.
			\end{equation}
		Our main result is as follows.
		\begin{theorem}\label{Thmmain}
			Let $a\in (0,1)$. There exist $C_0>0$,  $\varepsilon\in (0,1)$ such that if $
			|||f_0|||_{1+a}\leq\varepsilon,$
			and 
		$
				\liminf_{\kappa\to 0} 	|||f_0-f_0^\kappa|||_{1+a}=0$
		for some sequence $f_0^\kappa\in C^\infty_c(\mathbb{R}^2_x\times \mathbb{R}_v^2)$. Then the problem \eqref{eq2} has a unique global solution $f$ with 
			\begin{equation*}
				\|\rho\|_{1+a}+	\|U\|_{1+a}\leq C_0 	|||f_0|||_{1+a}.
			\end{equation*}
		\end{theorem}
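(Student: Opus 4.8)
The plan is to run a continuity/bootstrap argument for the density $\rho$ and the potential $U$ in the norm $\|\cdot\|_{1+a}$, built around the linear response operator whose kernel was analyzed in Proposition \ref{ProG}. First I would integrate \eqref{eq2} along the full nonlinear characteristics $(X(s;t,x,v),V(s;t,x,v))$ solving $\dot X=V,\ \dot V=E(s,X)$; since $\tfrac{d}{ds}f(s,X,V)=-E(s,X)\cdot\nabla_v\mu(V)$, this gives
$$f(t,x,v)=f_0\big(X(0),V(0)\big)-\int_0^t E(s,X(s))\cdot\nabla_v\mu(V(s))\,ds ,$$
and integrating in $v$ and changing variables turns \eqref{eq2} into a closed equation of Volterra type
$$\rho=R_0[f_0]+\mathcal L[\rho]+\mathcal N[\rho,U],$$
where $R_0[f_0](t,x)=\int_{\mathbb R^2}f_0(x-tv,v)\,dv$ is the free-transport density, $\mathcal L$ is the linearized response (convolution in $x$ against the kernel $G$ of Proposition \ref{ProG}, integrated over $s\in[0,t]$), and $\mathcal N$ gathers all genuinely nonlinear contributions: the deviation of $(X,V)$ from the free flow $(x-(t-s)v,v)$, the term produced by $-E\cdot\nabla_v f$ after an integration by parts in $v$ (which converts each $\nabla_v$ into a factor $(t-s)\nabla_x$), and the contribution of $A(U)$ to $E$ via the elliptic relation $U=(1-\Delta)^{-1}(\rho+A(U)),\ E=-\nabla_x U$.

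For the linear part I would first show $\|R_0[f_0]\|_{1+a}\lesssim|||f_0|||_{1+a}$: this is the dispersive decay of free transport, where each $v$-integration against a function with the $\mathcal D^a$-regularity controlled by $|||\cdot|||_{1+a}$ gains $\langle t\rangle^{-2}$ in $L^\infty$, the $\mathcal D^a$ control transfers to the homogeneous Besov norms, and a $\nabla_v$ falling on $f_0$ becomes $\langle t\rangle^{-1}$ decay together with a $\nabla_x$. The crucial step is then to invert $\mathrm{Id}-\mathcal L$ on the space with norm $\|\cdot\|_{1+a}$: combining the Penrose condition \eqref{penrose} with the pointwise estimates and the cancellations of the kernel $G$ from Proposition \ref{ProG}, the Volterra equation $\rho=S+\mathcal L[\rho]$ should have a unique solution obeying $\|\rho\|_{1+a}\lesssim\|S\|_{1+a}$, and correspondingly $\|U\|_{1+a}\lesssim\|\rho\|_{1+a}$ from the screened elliptic estimate for $(1-\Delta)^{-1}\nabla$ once the contribution of $A(U)$ is absorbed using Assumption 3.

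For the nonlinear step I would fix $T>0$, assume the bootstrap hypothesis $\|\rho\|_{1+a,T}+\|U\|_{1+a,T}\le 2C_0|||f_0|||_{1+a}$, and estimate $\mathcal N$. The characteristic deviation satisfies $|X(s;t,x,v)-(x-(t-s)v)|+(t-s)|V(s)-v|\lesssim\int_0^t\sigma\,\|E(\sigma)\|_{L^\infty}\,d\sigma$; feeding this, the field bounds, and the $v$-by-$t$ trick into $\mathcal N$ one seeks bilinear (and cubic) bounds of the shape $\|\mathcal N[\rho,U]\|_{1+a}\lesssim(\|\rho\|_{1+a,T}+\|U\|_{1+a,T})^2$. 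Together with the linear estimate this gives $\|\rho\|_{1+a,T}+\|U\|_{1+a,T}\le C_0|||f_0|||_{1+a}+C\big(C_0|||f_0|||_{1+a}\big)^2\le C_0|||f_0|||_{1+a}$ once $\varepsilon$ is small; since the left side is continuous in $T$ and correct at $T=0$, the bound propagates to all $T$. Global existence and uniqueness then follow from a local well-posedness statement in this class — the density hypothesis $\liminf_{\kappa\to 0}|||f_0-f_0^\kappa|||_{1+a}=0$ being used to give meaning to the characteristic ODEs and to pass to the limit from the smooth approximants $f_0^\kappa$ — combined with the uniform a priori bound.

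The hard part will be the borderline time integrals that make $d=2$ critical. The naive bound for the characteristic deviation and for the $\nabla_v$-in-the-nonlinearity terms involves $\int_0^t\sigma\,\langle\sigma\rangle^{-2}\,d\sigma\sim\log t$, which diverges, whereas the analogous integral is convergent when $d\ge 3$ (the case treated in \cite{HNX1}). To close the estimates I expect one has to exploit two things: that $\rho$ is only required to stay bounded, not to decay, in $L^1$, while its top Besov norm $\|\nabla\rho\|_{\dot{B}^a_{p,\infty}}$ does decay with the additional rate $\langle t\rangle^{-1-a}$, so that the fractional exponent $a\in(0,1)$ provides exactly the gain needed to beat the logarithm; and that one must use the sharp cancellations of the kernel $G$ in Proposition \ref{ProG}, not merely its size. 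Making these critical bilinear estimates close for every $a\in(0,1)$ is, I expect, the technical heart of the argument.
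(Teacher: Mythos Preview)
Your overall strategy---characteristic formulation, Volterra inversion via the Penrose kernel, and a bootstrap in $\|\cdot\|_{1+a,T}$ closed by the extra $a$-regularity---matches the paper's, and your diagnosis of the $2d$ logarithm is correct. Two refinements are worth flagging.

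First, a minor organizational point: the paper does \emph{not} take $R_0[f_0]$ to be the free-transport density. Instead it writes $\mathcal I_{f_0}(\mathfrak g)(t,x)=\int f_0(X_{0,t}(x,v),V_{0,t}(x,v))\,dv$ with the \emph{nonlinear} characteristics, and defines the reaction term $\mathcal R(\mathfrak g)$ as the difference between the linear and nonlinear $E\cdot\nabla_v\mu$ integrals. Then $\rho=(1+G)*_{t,x}(\mathcal I+\mathcal R+A(U))$ and one estimates $\|\mathcal I_{f_0}(\mathfrak g)\|_{1+a,T}\lesssim|||f_0|||_{1+a}$ uniformly over small $\mathfrak g$ (Lemma \ref{I a estimate}) and $\|\mathcal R(\mathfrak g)\|_{1+a,T}\lesssim\|\mathfrak g\|_{1+a,T}^{1+a}$ (Lemma \ref{esT}); note the exponent is $1+a$, not $2$, consistent with the gain you identify.

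Second, and more substantively, you locate the $2d$-specific difficulty partly in ``sharp cancellations of the kernel $G$''. In the paper Proposition \ref{ProG} is simply quoted from the $d\ge 3$ companion \cite{HNX1} and requires no new argument here. The genuinely new $2d$ ingredient is the pointwise $C^{1,a}$ control of the characteristic deviation in the $v$-variable, namely the uniform bound
\[
\sup_{0\le s\le t\le T}\ \sup_{\alpha}\frac{\|\delta_\alpha^v\nabla_v Y_{s,t}\|_{L^\infty_{x,v}}}{|\alpha|^a}\ \lesssim_a\ \|\mathfrak g\|_{1+a,T},
\]
singled out in Proposition \ref{estimates about Y and Wxx} (and proved as \eqref{highestresults of Y small}). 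This is exactly where the logarithm would appear: the naive Lipschitz estimate for $\nabla_v Y$ involves $\int_s^t(\tau-s)\langle\tau\rangle^{-2}\,d\tau$, while measuring the $v$-increment in $\dot C^a$ replaces one factor $\langle\tau\rangle^{-1}$ by $\min\{1,|\alpha|\langle\tau\rangle\}\langle\tau\rangle^{-1-a}$ after interpolation, yielding a convergent integral. This estimate then feeds Lemma \ref{esT} (the $\mathcal T^3_\alpha$ and $\mathcal T^{2,i}$ terms) to close $\|\nabla\mathcal R\|_{\dot B^a_{p,\infty}}$. Your plan would go through once you isolate this characteristic estimate as the technical heart, rather than the kernel bounds.
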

			\begin{remark}Our method does not work in the $1d$ dimension case because the density $\rho$ is not decaying enough to estimate for characteristics.  
		\end{remark}
		\begin{remark}
			While finishing our work, we learned that Toan Nguyen was working on this problem and had a similar result to ours. But the two works are independent.
		\end{remark}
	As \cite[Corollary 1.1]{HanKwanD2021}, thanks to Theorem \ref{Thmmain}, we also obtain the following scattering property for the solution to \eqref{eq2}. The proof is omitted as it is analogous to \cite[Proof of Corollary 1.1]{HanKwanD2021}.
		\begin{corollary}\label{cor1} With the same assumptions and notations as in Theorem \ref{Thmmain}, there is  a  function $f_\infty\in W^{1,\infty}$ given by 
			$$f_\infty(x,v)=f_0(x+Y_\infty(x,v),v+W_\infty(x,v))+\mu(v+W_\infty(x,v))-\mu(v),$$ such that 
			$$\sup_{t>0}\langle t\rangle\|f(t,x+tv,v)-f_\infty(x,v)\|_{L^\infty_{x,v}}\lesssim 	|||f_0|||_{1+a},~~\|Y_\infty\|_{L^\infty_{x,v}}+\|W_\infty\|_{L^\infty_{x,v}}\lesssim	|||f_0|||_{1+a}.$$
		\end{corollary}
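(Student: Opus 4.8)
The plan is to deduce the statement from Theorem \ref{Thmmain} by reading off the long-time behaviour of the characteristic flow of the self-consistent field, following the template of \cite[Proof of Corollary 1.1]{HanKwanD2021}; throughout, $f,\rho,U$ denote the global solution provided by Theorem \ref{Thmmain}. First I would record the pointwise-in-time decay of the force. Since $\|U\|_{1+a}\lesssim|||f_0|||_{1+a}$, the weights built into $\|\cdot\|_{1+a}$ give $\|U(t)\|_{L^\infty}\lesssim|||f_0|||_{1+a}\langle t\rangle^{-2}$ and $\|\nabla_x U(t)\|_{\dot{B}^a_{\infty,\infty}}\lesssim|||f_0|||_{1+a}\langle t\rangle^{-3-a}$; interpolating these yields $\|E(t)\|_{L^\infty}=\|\nabla_x U(t)\|_{L^\infty}\lesssim|||f_0|||_{1+a}\langle t\rangle^{-3}$, while combining the Besov bounds with elliptic regularity for $-\Delta U+U=\rho+A(U)$ and Assumption 3 gives $\|\nabla_x E(t)\|_{L^\infty}\lesssim|||f_0|||_{1+a}\langle t\rangle^{-2-\delta}$ for some $\delta=\delta(a)>0$; in particular
$$\int_0^{\infty}\langle s\rangle\,\|E(s)\|_{L^\infty}\,ds+\int_0^{\infty}\|\nabla_x E(s)\|_{L^\infty}\,ds\lesssim|||f_0|||_{1+a}.$$
I would also note that $|||f_0|||_{1+a}<\infty$ forces $f_0\in W^{1,\infty}_{x,v}$ (in $d=2$ one has $L^1\cap\dot{C}^a\hookrightarrow L^\infty$, applied in $x$ and in $v$ to $\nabla_{x,v}f_0$), and that $\mu\in W^{1,\infty}$ by Assumption 2.

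Next, the full distribution $\mathbf{f}=\mu+f$ is transported by the characteristic flow of $\dot{X}=V,\ \dot{V}=E(t,X)$, so $\mathbf{f}(t,y,w)=\mathbf{f}_0(\Phi_{t,0}(y,w))$ for every $(y,w)$, where $\Phi_{t,0}$ is the backward flow from time $t$ to time $0$. Evaluating at $(y,w)=(x+tv,v)$ and writing $\Phi_{t,0}(x+tv,v)=(x+Y_t(x,v),\,v+W_t(x,v))$, one computes $W_t=-\int_0^t E(s,X_{t,s})\,ds$ and $Y_t=\int_0^t s\,E(s,X_{t,s})\,ds$ (the extra factor of $s$ in $Y_t$ is the reason the position correction converges more slowly), so that $(Y_t,W_t)$ encodes the deviation from free transport, for which $Y_t=W_t=0$. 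The bounds of the previous paragraph, via a Gr\"onwall-type argument whose relevant constant $\int_0^\infty\|\nabla_x E\|_{L^\infty}\,ds$ is small, show that $(Y_t,W_t)$ is uniformly bounded with $\|Y_t\|_{L^\infty_{x,v}}+\|W_t\|_{L^\infty_{x,v}}\lesssim|||f_0|||_{1+a}$, lies in $W^{1,\infty}_{x,v}$ uniformly in $t$, and is Cauchy as $t\to\infty$ with
$$\|(Y_t,W_t)-(Y_\infty,W_\infty)\|_{L^\infty_{x,v}}\lesssim\int_t^{\infty}\langle s\rangle\,\|E(s)\|_{L^\infty}\,ds\lesssim|||f_0|||_{1+a}\langle t\rangle^{-1},$$
defining $Y_\infty,W_\infty\in W^{1,\infty}_{x,v}$ with $\|Y_\infty\|_{L^\infty}+\|W_\infty\|_{L^\infty}\lesssim|||f_0|||_{1+a}$.

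Finally, since $f=\mathbf{f}-\mu$,
$$f(t,x+tv,v)=\mathbf{f}_0(x+Y_t,v+W_t)-\mu(v)=f_0(x+Y_t,v+W_t)+\mu(v+W_t)-\mu(v),$$
which converges as $t\to\infty$ to $f_\infty(x,v)=f_0(x+Y_\infty,v+W_\infty)+\mu(v+W_\infty)-\mu(v)$; the chain rule, together with $f_0,\mu\in W^{1,\infty}$ and $Y_\infty,W_\infty\in W^{1,\infty}_{x,v}$, gives $f_\infty\in W^{1,\infty}_{x,v}$, while the mean value theorem gives
$$\|f(t,x+tv,v)-f_\infty\|_{L^\infty_{x,v}}\le\|\nabla f_0\|_{L^\infty}\|(Y_t,W_t)-(Y_\infty,W_\infty)\|_{L^\infty}+\|\nabla\mu\|_{L^\infty}\|W_t-W_\infty\|_{L^\infty}\lesssim|||f_0|||_{1+a}\langle t\rangle^{-1},$$
using $\|\nabla f_0\|_{L^\infty}\lesssim|||f_0|||_{1+a}$, Assumption 2, and $|||f_0|||_{1+a}<1$. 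The one genuinely $2d$-specific difficulty — and the step I expect to need the most care — is producing the $\langle t\rangle^{-1}$ rate: the bare decay $\|\rho(t)\|_{L^\infty}\sim\langle t\rangle^{-2}$ would only give $\int_t^\infty\langle s\rangle\|E(s)\|_{L^\infty}\,ds=O(\log\langle t\rangle)$, so one must exploit the full H\"older/Besov strength of Theorem \ref{Thmmain} to upgrade $\|E(t)\|_{L^\infty}$ to $\langle t\rangle^{-3}$; granting that, the remaining arguments are a routine transcription of \cite[Proof of Corollary 1.1]{HanKwanD2021}.
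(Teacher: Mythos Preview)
Your proposal is correct and follows exactly the approach the paper indicates, namely the argument of \cite[Proof of Corollary 1.1]{HanKwanD2021}; in particular you correctly isolate the one $2d$-specific ingredient, that the Besov control in Theorem~\ref{Thmmain} is what upgrades $\|E(t)\|_{L^\infty}$ to $\langle t\rangle^{-3}$ (this is recorded in the paper as \eqref{E estimate by rho}, and your interpolation between $\|U\|_{L^\infty}\lesssim\langle t\rangle^{-2}$ and $\|\nabla U\|_{\dot B^{a}_{\infty,\infty}}\lesssim\langle t\rangle^{-3-a}$ gives the same exponent), which is precisely what makes $\int_t^\infty\langle s\rangle\|E(s)\|_{L^\infty}\,ds\lesssim\langle t\rangle^{-1}$ and hence the stated rate available. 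The paper itself omits the proof for this reason.
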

		The following is a key proposition of this paper.  Let $T>0$,  $\mathfrak{g}:[0,T]\times \mathbb{R}^2\to \mathbb{R}$.  One considers
		\begin{equation}
			\mathbf{E}(t,x)=-\nabla (-\Delta +1)^{-1}(\mathfrak{g})(t,x), ~~||\mathfrak{g}||_{1+a,T}<\infty.
		\end{equation}
		Let $\left(X_{s,t}(x,v),V_{s,t}(x,v)\right)$  be the flow associated to the vector field $(v,\mathbf{E}(t,x))$, i.e.
		\begin{equation}\label{X and V}
			\begin{cases}
				\frac{d}{ds}X_{s,t}(x,v)=V_{s,t}(x,v),\qquad&X_{t,t}(x,v)=x,\\
				\frac{d}{ds}V_{s,t}(x,v)=	\mathbf{E}(s,X_{s,t}(x,v)),\qquad &V_{t,t}(x,v)=v,
			\end{cases}
		\end{equation} 
		for any $0\leq s\leq t\leq T$, $(x,v)\in \mathbb{R}^2\times\mathbb{R}^2$. Hence, 
		\begin{equation}\label{z4}
			X_{s,t}(x,v)=x-(t-s)v+Y_{s,t}(x-vt,v),~~V_{s,t}(x,v)=v+W_{s,t}(x-vt,v),
		\end{equation}
		where 
		\begin{align}\label{definition Y W}
			\begin{split}
				&Y_{s,t}(x,v)=\int_{s}^{t}(\tau-s)	\mathbf{E}(\tau,x+\tau v+Y_{\tau,t}(x,v))d\tau,\\&
				W_{s,t}(x,v)=-\int_{s}^{t}	\mathbf{E}(\tau,x+\tau v+Y_{\tau,t}(x,v))d\tau.
			\end{split}
		\end{align}
		\begin{proposition}\label{estimates about Y and Wxx} Let $a\in (0,1)$, then there exists $\varepsilon_0\in(0,1)$ such that  for any $\|\mathfrak{g}\|_{1+a,T}\leq \varepsilon_0$, we have
			\begin{align}	\sup_{0\leq s\leq t\leq T}\sup_{\alpha}	\frac{	\|\delta_{\alpha}^v	\nabla_vY_{s,t}\|_{L^\infty_{x,v}}}{|\alpha|^{a}}+\sup_{0\leq s\leq t\leq T}\langle s\rangle\sup_{\alpha}	\frac{	\|\delta_{\alpha}^v	\nabla_vW_{s,t}\|_{L^\infty_{x,v}}}{|\alpha|^a}\lesssim_a \|\mathfrak{g}\|_{1+a,T},\label{highestresults of Y smallxx}
		\end{align}
where 
			\begin{align*}
				\delta_{\alpha}^vY_{s,t}(x,v)=Y_{s,t}(x,v)-Y_{s,t}(x,v-\alpha),~~ \delta_{\alpha}^vW_{s,t}(x,v)=W_{s,t}(x,v)-W_{s,t}(x,v-\alpha).
			\end{align*}
		\end{proposition}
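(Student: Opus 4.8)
The plan is to run a bootstrap/continuity argument on the quantities
\[
\mathcal{Y}(T):=\sup_{0\le s\le t\le T}\Big(\sup_\alpha\frac{\|\delta_\alpha^v\nabla_v Y_{s,t}\|_{L^\infty_{x,v}}}{|\alpha|^a}\Big),\qquad
\mathcal{W}(T):=\sup_{0\le s\le t\le T}\langle s\rangle\Big(\sup_\alpha\frac{\|\delta_\alpha^v\nabla_v W_{s,t}\|_{L^\infty_{x,v}}}{|\alpha|^a}\Big),
\]
together with the lower-order quantities $\|\nabla_v Y_{s,t}\|_{L^\infty}$, $\langle s\rangle\|\nabla_v W_{s,t}\|_{L^\infty}$, $\|Y_{s,t}\|_{L^\infty}$, $\langle s\rangle\|W_{s,t}\|_{L^\infty}$, which must be controlled first (these follow from the same scheme and are presumably already available from the $L^\infty$-type estimates in \cite{HNX1}; I will treat them as the base layer of the bootstrap). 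The starting point is the integral representation \eqref{definition Y W}. Differentiating in $v$ and in the Hölder difference $\delta_\alpha^v$ produces, schematically,
\[
\delta_\alpha^v\nabla_v Y_{s,t}(x,v)=\int_s^t(\tau-s)\,\delta_\alpha^v\Big[\nabla\mathbf{E}(\tau,x+\tau v+Y_{\tau,t})\cdot(\tau\,\mathrm{Id}+\nabla_v Y_{\tau,t})\Big]\,d\tau,
\]
and similarly for $W$ without the $(\tau-s)$ weight and with an overall minus sign. One then distributes $\delta_\alpha^v$ by the discrete Leibniz rule across the three factors $\nabla\mathbf{E}$, the chain-rule shift $x+\tau v+Y$, and $(\tau\,\mathrm{Id}+\nabla_v Y)$.

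The key estimates feeding the integrand are the decay bounds on $\mathbf{E}=-\nabla(-\Delta+1)^{-1}\mathfrak{g}$ coming from $\|\mathfrak g\|_{1+a,T}\le\varepsilon_0$: the definition of $\|\cdot\|_{1+a,T}$ gives $\|\mathbf{E}(\tau)\|_{L^\infty}\lesssim\varepsilon_0\langle\tau\rangle^{-2}$, $\|\nabla\mathbf{E}(\tau)\|_{L^\infty}\lesssim\varepsilon_0\langle\tau\rangle^{-3}$ (the dimension-$2$ rates, with the Bessel potential supplying the extra regularity), and a Hölder bound $\|\nabla\mathbf{E}(\tau)\|_{\dot B^a_{\infty,\infty}}\lesssim\varepsilon_0\langle\tau\rangle^{-3-a}$ for the piece where the difference quotient lands on $\nabla\mathbf{E}$ itself. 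For the piece where $\delta_\alpha^v$ acts through the argument $x+\tau v+Y_{\tau,t}$, the displacement of the argument is $\tau\alpha+\delta_\alpha^v Y_{\tau,t}$, which has size $\lesssim(\tau+\|\nabla_v Y\|_{L^\infty})|\alpha|\lesssim\langle\tau\rangle|\alpha|$; combined with the $\dot B^a$ bound on $\nabla\mathbf E$ this contributes $\langle\tau\rangle^{-3}\cdot(\langle\tau\rangle|\alpha|)^a=\langle\tau\rangle^{a-3}|\alpha|^a$. Multiplying by the factor $(\tau-s)\le\tau$ and by $(\tau\,\mathrm{Id}+\nabla_vY_{\tau,t})$, whose operator norm is $\lesssim\langle\tau\rangle$, the worst term in the $Y$-integral behaves like $\int_s^t \langle\tau\rangle\cdot\langle\tau\rangle^{a-3}\cdot\langle\tau\rangle\,d\tau\,|\alpha|^a=\int_s^t\langle\tau\rangle^{a-1}\,d\tau\,|\alpha|^a$ — which does \emph{not} converge. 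This is the crux: one must exploit the cancellation structure, namely that the leading $\tau\,\mathrm{Id}$ part of $\tau\,\mathrm{Id}+\nabla_vY$ is multiplied against $\nabla\mathbf E$ which itself, when integrated against the weight $(\tau-s)$, produces exactly $\delta_\alpha^v(Y_{s,t}$ minus its linearization$)$; equivalently, one differentiates the identity $\partial_s Y_{s,t}=-W_{s,t}-\mathbf E(\cdot)\,(\cdots)$ type relation, or one integrates by parts in $\tau$ using $\frac{d}{d\tau}X_{\tau,t}$, to trade one power of $\langle\tau\rangle$ for a factor that decays. Concretely I expect to rewrite $(\tau-s)\nabla\mathbf E(\tau,X_{\tau,t})=-\frac{d}{d\tau}\big[(\tau-s)W_{\tau,t}+\ldots\big]$-type expressions and integrate by parts, which converts the divergent $\int\langle\tau\rangle^{a-1}d\tau$ into boundary terms plus $\int\langle\tau\rangle^{a-2}d\tau<\infty$, at the cost of now needing the Hölder bound on $\delta_\alpha^v\nabla_v W$ (whence the coupling between $\mathcal Y$ and $\mathcal W$, and the $\langle s\rangle$-weight in the definition of $\mathcal W$ is precisely what makes the $W$-integral $\int_s^t\langle\tau\rangle^{-1}\cdot(\text{decaying})\,d\tau$ close).

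So the order of operations is: (i) record the $L^\infty$ and $\dot B^a_{\infty,\infty}$ decay estimates for $\mathbf E$ and $\nabla\mathbf E$ from $\|\mathfrak g\|_{1+a,T}\le\varepsilon_0$; (ii) establish the base-layer bounds on $Y,W,\nabla_vY,\nabla_vW$ in $L^\infty$ (Picard iteration on \eqref{definition Y W}, $\varepsilon_0$ small, as in \cite{HNX1}); (iii) differentiate \eqref{definition Y W}, apply the discrete Leibniz rule to $\delta_\alpha^v\nabla_v Y$ and $\delta_\alpha^v\nabla_v W$, and split into the three families of terms (difference on $\nabla\mathbf E$, on the argument, on the Jacobian factor); (iv) for the non-integrable family, integrate by parts in $\tau$ along the characteristic flow to lower the power of $\langle\tau\rangle$, producing a closed system of inequalities
\[
\mathcal Y(T)\lesssim \varepsilon_0 + \varepsilon_0\big(\mathcal Y(T)+\mathcal W(T)\big),\qquad \mathcal W(T)\lesssim \varepsilon_0 + \varepsilon_0\big(\mathcal Y(T)+\mathcal W(T)\big),
\]
uniformly in $T$; (v) absorb the right-hand side for $\varepsilon_0$ small to conclude $\mathcal Y(T)+\mathcal W(T)\lesssim_a\varepsilon_0\lesssim_a\|\mathfrak g\|_{1+a,T}$, and note all constants are $T$-independent so the bound passes to $T=\infty$ on the stated range. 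The main obstacle is step (iv): identifying the precise cancellation/integration-by-parts that tames the borderline $\int\langle\tau\rangle^{a-1}d\tau$ divergence while keeping the bookkeeping of which Hölder-difference quantity each term is estimated by — this is exactly where the $d=2$ case is delicate (as the first remark after Theorem \ref{Thmmain} signals, the margin here is thin and would disappear in $d=1$), and where the sharp Besov-space decay of $\mathfrak g$ rather than mere $L^1$--$L^\infty$ decay is essential.
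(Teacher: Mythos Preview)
Your diagnosis that the naive H\"older bound on $\delta_\alpha^v[(\nabla\mathbf E)(\tau,x+\tau v+Y_{\tau,t})]$ leads to a borderline-divergent $\tau$-integral is correct in spirit, but your proposed remedy (an unspecified integration by parts in $\tau$, coupling $\mathcal Y$ and $\mathcal W$) is both vague and unnecessary. The paper avoids the divergence with two ingredients you are missing.

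First, the decay rates for $\mathbf E$ are stronger than you assume: from $\|\mathfrak g\|_{1+a,T}$ one obtains (this is \cite[Lemma~4.2]{HNX1}, recorded here as \eqref{E estimate by rho})
\[
\langle\tau\rangle^{3}\|\mathbf E(\tau)\|_{L^\infty}+\langle\tau\rangle^{3+a}\|\nabla\mathbf E(\tau)\|_{L^\infty}+\langle\tau\rangle^{3+a}\|\nabla^2\mathbf E(\tau)\|_{L^\infty}\lesssim\|\mathfrak g\|_{1+a,T}.
\]
Note in particular the $L^\infty$ bound on the \emph{second} derivative $\nabla^2\mathbf E$ (not merely a $\dot B^a$ bound on $\nabla\mathbf E$), and that the exponent for $\nabla\mathbf E$ is $3+a$, not $3$. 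The extra decay comes from the $\langle s\rangle^{3+a}\|\nabla\mathfrak g\|_{\dot B^a_{\infty,\infty}}$ piece of the $\|\cdot\|_{1+a,T}$ norm together with the Bessel smoothing.

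Second, with $\|\nabla^2\mathbf E\|_{L^\infty}$ available, the paper does \emph{not} use the H\"older estimate on $\nabla\mathbf E$; instead it bounds the difference by
\[
\min\Big\{\|\nabla\mathbf E(\tau)\|_{L^\infty},\ \|\nabla^2\mathbf E(\tau)\|_{L^\infty}\big(|\alpha|\tau+\|\delta_\alpha^v Y_{\tau,t}\|_{L^\infty}\big)\Big\}\lesssim\|\mathfrak g\|_{1+a,T}\,\frac{\min\{1,|\alpha|\langle\tau\rangle\}}{\langle\tau\rangle^{3+a}}.
\]
After multiplying by $(\tau-s)\big(\tau+\|\nabla_v Y_{\tau,t}\|_{L^\infty}\big)\lesssim\langle\tau\rangle^2$ and integrating, one is left with $\int_0^\infty\langle\tau\rangle^{-1-a}\min\{1,|\alpha|\langle\tau\rangle\}\,d\tau\sim|\alpha|^a$, which is finite. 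This closes the $\mathcal Y$-inequality directly, $\mathcal Y(T)\le\tfrac{1}{10}\mathcal Y(T)+c\|\mathfrak g\|_{1+a,T}$, with no appeal to $\mathcal W$ and no integration by parts.

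Finally, the $\delta_\alpha^v\nabla_v W$ bound is \emph{not} coupled to $\mathcal Y$: it already sits in the base layer \eqref{results of W small}, obtained exactly as in \cite[Proof of Proposition~4.1]{HNX1} (the $W$-integral carries no $(\tau-s)$ weight, so it has one more power of $\langle\tau\rangle^{-1}$ to spare). Only the $\delta_\alpha^v\nabla_v Y$ estimate \eqref{highestresults of Y small} is genuinely new in $d=2$, and its proof is the min-trick computation above.
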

	\begin{remark} Estimate of the first term (in LHS) in \eqref{highestresults of Y smallxx} is critical. It is a key estimate in the  proof of Theorem \ref{Thmmain}. 
	\end{remark}
The strategy to prove Theorem \ref{Thmmain} in this paper is slightly different from that in our previous paper \cite{HNX1}. First, we use the fixed-point theorem with a local in-time norm to prove the local existence result (see Proposition \ref{propolocal}), then we establish a suitable bootstrap property for the problem \eqref{eq2} (see Proposition  \ref{propoglobal}). We finish the proof of Theorem \ref{Thmmain} by using a bootstrap argument.\vspace{0.2cm}\\
The paper is organized as follows. In section 2, we establish the equivalence of $\rho$ and $f$, then we state our bootstrap property and local existence result; in the last part of this section, we prove Theorem \ref{Thmmain}. In section 3, we prove pointwise estimates of characteristics $(Y_{s,t},W_{s,t})$ in \eqref{definition Y W}.  In section 4, we estimate the contribution of the initial data and the reaction term. 
In Section 5, we prove our bootstrap property and local existence result. 
		\vspace{0.4cm}\\
		\textbf{Acknowledgements:} 
		Q.H.N.  is supported by the Academy of Mathematics and Systems Science, Chinese Academy of Sciences startup fund, and the National Natural Science Foundation of China (No. 12050410257 and No. 12288201) and  the National Key R$\&$D Program of China under grant 2021YFA1000800. Q.H.N. also wants to thank Benoit Pausader for his stimulating comments and suggestion to consider the Vlasov-Poisson system with massless electrons.
		\section{Dynamics of the density and bootstrap argument}
		\label{secboot}
		In this section, we recast  the system  \eqref{eq2} as a problem for the density $\rho$. Let  $\left(X_{s,t}(x,v),V_{s,t}(x,v)\right)$  be the flow associated to the vector field $(v,E(t,x))$. 
		Then, the solution $f$ of the equation \eqref{eq2} is given by 
		\begin{equation}\label{t1}
			f(t,x,v)=f_0(X_{0,t}(x,v),V_{0,t}(x,v))-\int_{0}^{t}E(s,X_{s,t}(x,v))\cdot\nabla_v\mu(V_{s,t}(x,v))ds,
		\end{equation}
		(see (1.3) in \cite{Hung2021} with $\mathbf{B}(t,x,v)=(v,E(t,x))$).  Set $\mathfrak{g}=\rho+A(U)$, so $E=-\nabla_x(-\Delta+1)^{-1}\mathfrak{g}.$
		We denote
		\begin{align}\label{I R}
			\begin{split}
				&\mathcal{I}_{f_0}(\mathfrak{g})(t,x)=	\int_{\mathbb{R}^2}f_0(X_{0,t}(x,v),V_{0,t}(x,v))dv,\\
				&\mathcal{R}(\mathfrak{g})(t,x)=\int_{0}^{t}\int_{\mathbb{R}^2}\left(E(s,x-(t-s)v)\cdot\nabla_v\mu(v)-E(s,X_{s,t}(x,v))\cdot\nabla_v\mu(V_{s,t}(x,v))\right)dvds.
			\end{split}
		\end{align}
	As \cite[(2.5)]{HNX1}, we obtain that $\rho(t,x)=\int_{\mathbb{R}^2}f(t,x,v)dv$ is the solution of the following equation:
		\begin{equation}\label{equation rho}
			\rho=G*_{(t,x)}(\mathcal{I}(\mathfrak{g})+\mathcal{R}(\mathfrak{g})+A(U))+\mathcal{I}(\mathfrak{g})+\mathcal{R}(\mathfrak{g}),
		\end{equation}
		where  $G$ is the kernel satisfying:
		\begin{equation}\label{definition of G about K}
			\widetilde{G}=\frac{\widetilde{K}}{1-\widetilde{K}},~~
			\widetilde{K}(\tau,\xi)=\int_{0}^{+\infty}e^{-i\tau t}\frac{1}{1+|\xi|^2} i\xi\cdot\widehat{\nabla_v\mu}(t\xi)dt,
		\end{equation}
	and $\widetilde{\cdot}$ is the "space-time" Fourier transform on $\mathbb{R}^{2}\times\mathbb{R}$.
		Thanks to the Penrose condition, one has 
		$
		|1-\widetilde{K}|\geq \text{\r c}>0,
		$ which implies that 
		$\widetilde{G}$ is well defined.\vspace{0.1cm}\\
		Define for $T>0$
		$$\mathcal{V}_{\varepsilon}:=\{h\in L^1\cap L^\infty(\mathbb{R}^2):||h||_{L^1\cap L^\infty(\mathbb{R}^2)}\leq\varepsilon \}.$$
			By the standard argument, one has 
		\begin{lemma}\label{lem0} Let $a\in (0,1)$. There exist $C_0\geq 1$ and $\tilde{\varepsilon}_0\in (0,1)$ such that if $\varrho\in \mathcal{V}_{\tilde{\varepsilon}_0}$, the problem 
			\begin{equation}
				-\Delta u+u=\varrho+A(u)
			\end{equation}
			has a unique soluion $u$ in $\mathcal{V}_{c\tilde{\varepsilon}_0}$ satisfying 
			\begin{equation}\label{zzz18}
			\sum_{j=0}^{2}	\sum_{p=1,\infty}\left(	||(u,A(u))||_{L^p}+||\nabla^j(u,A(u))||_{\dot{B}^{\frac{a}{2}}_{p,\infty}}\right)\leq C_0\sum_{p=1,\infty}\left(	||\varrho||_{L^p}+||\varrho||_{\dot{B}^{a}_{p,\infty}}\right).
			\end{equation}
			In addition, we can define a map $\mathcal{N}:\mathcal{V}_{\tilde{\varepsilon}_0}\to \mathcal{V}_{c\tilde{\varepsilon}_0}$: $u=\mathcal{N}(\varrho)$ for any $\rho\in \mathcal{V}_{\tilde{\varepsilon}_0}$. The map $\mathcal{N}$ satisfies 
			\begin{align*}
				&\sum_{j=0}^{2}	\sum_{p=1,\infty}\left(	||\mathcal{N}(\varrho_1)-\mathcal{N}(\varrho_2)||_{L^p}+||\nabla^j(\mathcal{N}(\varrho_1)-\mathcal{N}(\varrho_2))||_{\dot{B}^{\frac{a}{2}}_{p,\infty}}\right)\\&\quad\quad\quad\quad\quad\quad\quad\quad\quad\quad\quad\quad\quad\quad\quad\quad\quad\quad\leq C_0\sum_{p=1,\infty}\left(	||\varrho_1-\varrho_2||_{L^p}+||\varrho_1-\varrho_2||_{\dot{B}^{a}_{p,\infty}}\right).
			\end{align*}
		\end{lemma}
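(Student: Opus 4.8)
\textbf{Proof proposal (Lemma \ref{lem0}).} The plan is a standard fixed-point/bootstrap argument for the map $\Phi(u):=\mathcal G\big(\varrho+A(u)\big)$, $\mathcal G:=(1-\Delta)^{-1}$, whose fixed points are exactly the solutions of $-\Delta u+u=\varrho+A(u)$. I would first obtain existence, uniqueness and the map $\mathcal N$ by running Banach's fixed point theorem in the complete metric space $\mathcal V_{c\tilde\varepsilon_0}$ (a ball of $L^1\cap L^\infty(\mathbb R^2)$): since the Bessel kernel on $\mathbb R^2$ is nonnegative, of unit mass, with a logarithmic singularity and exponential decay, $\mathcal G$ is a contraction on $L^1$ and on $L^\infty$, and $\|A(u)\|_{L^1\cap L^\infty}\le C_A\|u\|_{L^\infty}\|u\|_{L^1\cap L^\infty}$ by Assumption 3 and $A(r)=O(r^2)$; hence for $\tilde\varepsilon_0$ small $\Phi$ maps $\mathcal V_{c\tilde\varepsilon_0}$ into itself and contracts there, yielding the unique solution $u=\mathcal N(\varrho)\in\mathcal V_{c\tilde\varepsilon_0}$ and, applied to $\Phi(u_1)-\Phi(u_2)=\mathcal G\big((\int_0^1 A'(u_2+t(u_1-u_2))\,dt)(u_1-u_2)\big)$, the $L^1\cap L^\infty$ Lipschitz bound for $\mathcal N$. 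Note $\|u\|_{L^p}\le\|\varrho\|_{L^p}+\|A(u)\|_{L^p}\lesssim\|\varrho\|_{L^p}$, so $\|u\|_{L^p}$ inherits the smallness of $\|\varrho\|_{L^p}$. It remains to upgrade to the quantitative bound \eqref{zzz18} and its difference version, which one does by feeding the solution back into $u=\mathcal G(\varrho+A(u))$ and using the smoothing together with the smallness just recorded.

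For the upgrade I would use three ingredients. (i) Mapping properties of $\mathcal G$ on $\mathbb R^2$: $\nabla\mathcal G$ has an $L^1$ convolution kernel (the $|x|^{-1}$ singularity is integrable in two dimensions), so $\mathcal G$ and $\nabla\mathcal G$ are bounded on $L^p$ and on $\dot B^s_{p,\infty}$ for $p\in\{1,\infty\}$, $s\in(0,1)$; the symbol $\xi\otimes\xi/(1+|\xi|^2)$ of $\nabla^2\mathcal G$ is Riesz-type at high frequency and $O(|\xi|^2)$ at low frequency, so $\nabla^2\mathcal G$ is bounded on $\dot B^s_{p,\infty}$ ($p\in\{1,\infty\}$, $s\in(0,1)$) and on $L^q$ ($1<q<\infty$), but not on $L^1$ or $L^\infty$. (ii) Composition estimates: from $A(r)=O(r^2)$, the bounds on $A',A'',A^{(3)}$ in Assumption 3 and $\|u\|_{L^\infty}\lesssim\tilde\varepsilon_0$ one gets $\|A(u)\|_{L^p}\lesssim\|u\|_{L^\infty}\|u\|_{L^p}$, the Moser-type bound $\|A(u)\|_{\dot B^s_{p,\infty}}\lesssim\|u\|_{L^\infty}\|u\|_{\dot B^s_{p,\infty}}$ ($s\in(0,1)$), and — via $\nabla A(u)=A'(u)\nabla u$, $\nabla^2A(u)=A''(u)\,\nabla u\otimes\nabla u+A'(u)\nabla^2u$ and the Besov product rule $\|fg\|_{\dot B^s_{p,\infty}}\lesssim\|f\|_{L^\infty}\|g\|_{\dot B^s_{p,\infty}}+\|g\|_{L^\infty}\|f\|_{\dot B^s_{p,\infty}}$ — control of $\nabla A(u),\nabla^2A(u)$ in $\dot B^{a/2}_{p,\infty}$, each term carrying a small prefactor ($\|u\|_{L^\infty}$ or $\|\nabla u\|_{L^\infty}$, both $\lesssim\tilde\varepsilon_0$). (iii) Interpolation in the Besov scale: $\|f\|_{\dot B^{a/2}_{p,\infty}}\lesssim\|f\|_{L^p}^{1/2}\|f\|_{\dot B^a_{p,\infty}}^{1/2}$, and the Gagliardo--Nirenberg embedding $L^{2/a}\cap\dot B^a_{\infty,\infty}\hookrightarrow L^\infty$ with exponents $\tfrac12,\tfrac12$, used to recover $\nabla^2u$ in $L^\infty$ with a small factor (since $\|\nabla^2u\|_{L^{2/a}}\lesssim\|\varrho+A(u)\|_{L^{2/a}}\lesssim\tilde\varepsilon_0$) so that the product rule applies to $A'(u)\nabla^2u$. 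Combining these and using $\|\varrho\|_{\dot B^{a/2}_{p,\infty}}\lesssim\|\varrho\|_{L^p}^{1/2}\|\varrho\|_{\dot B^a_{p,\infty}}^{1/2}$ on the data, one obtains, for $N(\varrho):=\sum_{p=1,\infty}(\|\varrho\|_{L^p}+\|\varrho\|_{\dot B^a_{p,\infty}})$ and $X:=\sum_{p=1,\infty}(\|u\|_{L^p}+\sum_{j=0}^2\|\nabla^j(u,A(u))\|_{\dot B^{a/2}_{p,\infty}})$, an inequality $X\le C\,N(\varrho)+C\tilde\varepsilon_0\,X$, whence \eqref{zzz18} after absorbing the last term; running the same estimates on the difference equation $-\Delta w+w=(\varrho_1-\varrho_2)+(A(u_1)-A(u_2))$, $w=u_1-u_2$, yields the Besov Lipschitz bound for $\mathcal N$.

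The step I expect to be the main obstacle is the endpoint second-order composition estimate, i.e. bounding $\nabla^2A(u)=A''(u)\,\nabla u\otimes\nabla u+A'(u)\nabla^2u$ in $\dot B^{a/2}_{1,\infty}$ and $\dot B^{a/2}_{\infty,\infty}$. The quadratic term $A''(u)\,\nabla u\otimes\nabla u$ is harmless because $\nabla u\in L^\infty$ in two dimensions, but in $A'(u)\nabla^2u$ the factor $\nabla^2u=\nabla^2\mathcal G(\varrho+A(u))$ is \emph{not} controlled in $L^\infty$ by $\nabla^2\mathcal G$ alone, so the naive product rule is unavailable; one recovers $\nabla^2u\in L^\infty$ only after the Gagliardo--Nirenberg step, and one must place $A'(u)$ in $\dot B^{a/2}_{p,\infty}$ rather than $\dot B^a_{p,\infty}$ so that the interpolation $\|A'(u)\|_{\dot B^{a/2}_{p,\infty}}\lesssim\|A'(u)\|_{L^p}^{1/2}\|A'(u)\|_{\dot B^a_{p,\infty}}^{1/2}$ converts the $O(\|u\|_{L^\infty})$-smallness of $\|A'(u)\|_{L^p}$ into an actual smallness gain; at the unreduced level $a$ this mechanism fails and the estimate does not close. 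This is precisely why the conclusion is stated with exponent $a/2$ (strictly below the regularity $a$ of the data) and why it suffices that $A$ be $C^3$. Everything else is routine, and $\tilde\varepsilon_0$ is fixed at the very end so that the sum of the finitely many $O(\tilde\varepsilon_0)$ constants above is at most $\tfrac12$.
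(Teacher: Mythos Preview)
The paper gives no proof of this lemma: it is introduced with the single phrase ``By the standard argument, one has'' and then stated without further justification. Your proposal is therefore not competing with a written argument but rather supplying one, and the route you take --- Banach fixed point in $L^1\cap L^\infty$ for existence and uniqueness, followed by a bootstrap through the identity $u=(1-\Delta)^{-1}(\varrho+A(u))$ using the $L^1$-kernel bounds for $(1-\Delta)^{-1}$ and $\nabla(1-\Delta)^{-1}$, the H\"ormander--Mikhlin boundedness of $\nabla^2(1-\Delta)^{-1}$ on $\dot B^s_{p,\infty}$ for $p\in\{1,\infty\}$, and Moser-type composition estimates exploiting Assumption~3 --- is exactly what ``standard argument'' means here.

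You have also correctly located the only genuinely delicate point, namely the term $A'(u)\nabla^2 u$ in $\nabla^2 A(u)$ at the endpoints $p=1,\infty$, where $\nabla^2 u$ is not a priori in $L^1$ or $L^\infty$. Your recovery of $\nabla^2 u\in L^\infty$ via interpolation between $\|\nabla^2 u\|_{L^q}\lesssim\|\varrho+A(u)\|_{L^q}$ (Calder\'on--Zygmund, $1<q<\infty$) and $\|\nabla^2 u\|_{\dot B^{a/2}_{\infty,\infty}}\lesssim\|\varrho+A(u)\|_{\dot B^{a/2}_{\infty,\infty}}$ is a legitimate way to close this; one minor caution is that the resulting bound on $\|\nabla^2 u\|_{L^\infty}$ carries a factor of $N(\varrho)^{1-\theta}$, so to keep the final estimate \emph{linear} in $N(\varrho)$ you should pair it with the smallness $\|A'(u)\|_{\dot B^{a/2}_{p,\infty}}\lesssim\|u\|_{L^\infty}^{1/2}\|u\|_{\dot B^{a}_{p,\infty}}^{1/2}\lesssim\tilde\varepsilon_0^{1/2}N(\varrho)^{1/2}$ (this is where the drop from $a$ to $a/2$ is used, as you note), rather than with $\|A'(u)\|_{\dot B^{a/2}_{p,\infty}}\lesssim N(\varrho)$. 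With that bookkeeping the closure $X\le C\,N(\varrho)+C\tilde\varepsilon_0^{\gamma}X$ goes through as you describe.
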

Thanks to Lemma \ref{lem0}, $\mathfrak{g}$ and $U$ in \eqref{equation rho} can be performed via $\rho$: 
	\begin{equation}\label{zzz15}
		\mathfrak{g}(t)=\rho(t)+A(\mathcal{N}(\rho)(t)), U(t)=\mathcal{N}(\rho(t)),
	\end{equation}
provided that $\rho(t)\in \mathcal{V}_{\tilde{\varepsilon}_0}$. 
In particular, we can write \eqref{equation rho} as an equation for the density $\rho$ in $[0,T]$ when $\rho(t)\in \mathcal{V}_{\tilde{\varepsilon}_0}$ for any $t\in [0,T]$. \vspace{0.15cm}\\
		The following is  the boundedness of the operator $\mathcal{G}:=G*_{(t,x)}$ in \cite[ Theorem 3.9]{HNX1}. 	\begin{proposition} \label{ProG}There holds 
			\begin{equation}
			\|G*_{(t,x)}g\|_{1+a, T}\leq C	||g\|_{1+a,T},
			\end{equation}
		where $C=C(\|\langle \cdot\rangle^{7}\nabla\mu(\cdot)\|_{W^{11,1}},a,\text{\r c} ).$
		\end{proposition}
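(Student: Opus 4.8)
The plan is to split the estimate into two essentially independent parts. First, I would establish pointwise-in-$(t,x)$ bounds on the kernel $G$ of \eqref{definition of G about K}: time decay of $G$ and $\nabla G$, a fractional-smoothness-against-weight bound, and a mean-zero cancellation $\int_{\mathbb R^2}G(t,y)\,dy=0$. Second, I would run a space--time convolution (Schur-type) argument that converts these kernel bounds into the operator estimate on $\|\cdot\|_{1+a,T}$, taking care that all time integrals run over $[0,t]\subset[0,T]$ so the constant is $T$-independent.

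For the kernel bounds I would start from the spatial Fourier symbol. By \eqref{definition of G about K}, $\widehat K(t,\xi)=\tfrac{1}{1+|\xi|^2}\,i\xi\cdot\widehat{\nabla_v\mu}(t\xi)$ for $t>0$, so after the rescaling $\eta=t\xi$ the generating kernel $K(t,\cdot)$ is a dilation at scale $t$ of an essentially fixed profile built from $\nabla_v\mu$, up to a harmless bounded smooth modulation coming from the factor $(1+|\eta|^2/t^2)^{-1}$. The hypothesis $\|\langle\cdot\rangle^{7}\nabla\mu\|_{W^{11,1}}$ (part of Assumption 2) makes $\widehat{\nabla_v\mu}$ smooth with rapid polynomial decay, hence the profile together with its derivatives, weighted versions and Hölder seminorms lies in $L^1\cap L^\infty$; scaling then yields, for $p\in\{1,\infty\}$ and $j\in\{0,1\}$,
\begin{equation*}
\|\nabla^j K(t)\|_{L^p_x}\lesssim\langle t\rangle^{-1-j-\frac{2(p-1)}{p}},\qquad \int_{\mathbb R^2}|y|^{a}\,|\nabla^j K(t,y)|\,dy\lesssim\langle t\rangle^{a-1-j}
\end{equation*}
together with $\int_{\mathbb R^2}K(t,y)\,dy=\widehat K(t,0)=0$. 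To pass from $K$ to $G=\mathcal F^{-1}_{(\tau,\xi)}\big[\tfrac{\widetilde K}{1-\widetilde K}\big]$ I would invoke the Penrose condition \eqref{penrose}, i.e. $|1-\widetilde K|\ge\text{\r c}$: then $\tfrac{1}{1-\widetilde K}$ is bounded, and since $\widetilde K$ is smooth in $\xi$ and decays in $\tau$ (because $\widehat K(t,\xi)$ is integrable in $t$), the symbol $\tfrac{\widetilde K}{1-\widetilde K}$ lies in the same class as $\widetilde K$, with constants depending only on $\text{\r c}$, $a$ and $\|\langle\cdot\rangle^{7}\nabla\mu\|_{W^{11,1}}$; hence $G(t,\cdot)$ obeys the same family of bounds as $K(t,\cdot)$ above and inherits $\int_{\mathbb R^2}G(t,y)\,dy=\widetilde G(\tau,0)=0$. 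This kernel analysis --- the ``new estimates and cancellations of the kernel'' of \cite{HNX1} --- is where the real difficulty lies: the delicate points are the degeneracy of the symbol near $\xi=0$ and the limited smoothness of $\widetilde K$ in $\tau$ at $\tau=0$ coming from the causal cutoff $t>0$, which caps how many $t$-derivatives of $G$ one controls but is harmless here since only integrable-in-$t$ control of $G$ and $\nabla G$ is needed.

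For the convolution estimate I would write $(G\ast_{(t,x)}g)(t,x)=\int_0^t\big(G(t-s)\ast_x g(s)\big)(x)\,ds$ and bound each of the finitely many constituents of $\|G\ast_{(t,x)}g\|_{1+a,T}$ separately, via $\|(G\ast_{(t,x)}g)(t)\|_{\bullet}\le\int_0^t\|G(t-s)\ast_x g(s)\|_{\bullet}\,ds$, Young's inequality, the kernel bounds of Step~1, and the decay of $g$ encoded in $\|g\|_{1+a,T}$. The only non-integrable kernel weight is $\|G(t-s)\|_{L^1_x}\sim\langle t-s\rangle^{-1}$; wherever it would appear one instead uses the mean-zero cancellation
\begin{equation*}
\big(G(t-s)\ast_x g(s)\big)(x)=\int_{\mathbb R^2}G(t-s,y)\,\big[g(s,x-y)-g(s,x)\big]\,dy,
\end{equation*}
which gives $\|G(t-s)\ast_x g(s)\|_{L^p_x}\lesssim\langle t-s\rangle^{a-1}\|g(s)\|_{\dot B^a_{p,\infty}}$ with exponent $a-1>-1$ now integrable near $s=t$. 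For the $L^1_x$ component this produces directly $\|(G\ast_{(t,x)}g)(t)\|_{L^1_x}\lesssim\int_0^t\langle t-s\rangle^{a-1}\langle s\rangle^{-a}\,ds\;\|g\|_{1+a,T}\lesssim\|g\|_{1+a,T}$ (a Beta-type integral bounded by $B(a,1-a)$), matching the weight $\langle t\rangle^{0}$; for the $L^\infty_x$ component one splits $[0,t]$ at $t/2$, using $\|G(t-s)\|_{L^\infty_x}\,\|g(s)\|_{L^1_x}\lesssim\langle t-s\rangle^{-3}$ on $[0,t/2]$ (integral $\sim\langle t\rangle^{-3}t=\langle t\rangle^{-2}$) and the cancellation against $\|g(s)\|_{\dot B^a_{\infty,\infty}}\lesssim\langle s\rangle^{-a-2}$ on $[t/2,t]$ (integral $\sim\langle t\rangle^{-a-2}\langle t\rangle^{a}=\langle t\rangle^{-2}$). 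The first-derivative and higher-Besov components are handled by the same dichotomy: commute $\nabla$ with the convolution, place derivatives and Hölder smoothing on $g(s)$ near $s=t$ and on $G(t-s)$ near $s=0$, and interpolate $\|\delta_\alpha G\|_{L^1}$ between $\|G\|_{L^1}$ and $|\alpha|\,\|\nabla G\|_{L^1}$ to extract fractional smoothing from the kernel when needed. Summing the pieces gives $\|G\ast_{(t,x)}g\|_{1+a,T}\le C\|g\|_{1+a,T}$ with $C=C(\|\langle\cdot\rangle^{7}\nabla\mu\|_{W^{11,1}},a,\text{\r c})$. I expect Step~1 --- transferring decay, regularity and cancellation from $K$ to $G$ through the Penrose-controlled resolvent, uniformly down to $\xi=0$ and $\tau=0$ --- to be the main obstacle.
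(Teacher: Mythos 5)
The paper does not prove Proposition~\ref{ProG}: it is quoted verbatim from \cite[Theorem 3.9]{HNX1} and the line immediately above the statement says exactly that, so there is no ``paper's own proof'' here to compare against. What can be said is that your outline matches the high-level description the paper itself gives of \cite{HNX1} (``established new estimates and cancellations of the kernel to the linearized problem''), and the Schur-type part of your argument is internally coherent. In particular, the ingredients you isolate are correct and, in combination, do reproduce the weights demanded by $\|\cdot\|_{1+a,T}$: $\widehat K(t,0)=0$ forces $\widehat G(t,0)=0$, so the mean-zero substitution upgrades the borderline $\|G(t-s)\|_{L^1_x}\sim\langle t-s\rangle^{-1}$ to $\langle t-s\rangle^{a-1}$ against $\dot B^a_{p,\infty}$; the $L^1/L^\infty$ dichotomy at $s=t/2$ then yields $\langle t\rangle^{0}$ and $\langle t\rangle^{-2}$ for the $L^1$ and $L^\infty$ pieces; and for the $\dot B^a_{p,\infty}$ pieces the interpolation $\min\{|\alpha|\,\|\nabla G(u)\|_{L^1},\|G(u)\|_{L^1}\}\lesssim|\alpha|^a\langle u\rangle^{-1-a}$ on $[0,t/2]$, combined with the mean-zero gain and $\min\{|\alpha|\,\|\nabla g(s)\|_{\dot B^a},\|g(s)\|_{\dot B^a}\}\lesssim|\alpha|^a\langle s\rangle^{-2a}$ on $[t/2,t]$, closes the needed $\langle t\rangle^{-a}$ and $\langle t\rangle^{-2-a}$ bounds without the $\log$-loss that a naive Young estimate would produce.

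The genuine gap is the one you flag yourself: the passage from $K$ to $G$. Your formulation ``since $\widetilde K$ is smooth in $\xi$ and decays in $\tau$ ... the symbol $\widetilde K/(1-\widetilde K)$ lies in the same class as $\widetilde K$'' is a heuristic, not an argument. Multiplying a space-time symbol by a uniformly bounded resolvent does not in itself preserve sharp $L^1_x$ and $L^\infty_x$ decay rates of the physical-space kernel at each fixed $t$; those are not Fourier-multiplier statements, and near $\xi=0$ the symbol $\widetilde K$ degenerates exactly where the weights $\langle t\rangle^{-1}, \langle t\rangle^{-3}$ are borderline. Establishing the family of bounds on $G(t,\cdot)$ (decay, moments, Hölder moduli, cancellation) uniformly in $t$, under only Assumption 2 and the Penrose condition, is the actual content of \cite[\S3]{HNX1} and constitutes several pages of work; your sketch correctly names it as the obstacle but does not supply it. Also a small point of precision: the limited regularity caused by the causal cutoff $\mathbf 1_{t>0}$ manifests as slow \emph{decay} of $\widetilde K$ in $\tau$ at high frequency, not as limited smoothness of $\widetilde K$ ``at $\tau=0$''; as you say it is harmless here, but the mechanism you cite for it is stated the wrong way around.
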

		The following is our main bootstrap proposition.
		\begin{proposition} \label{propoglobal} 	Let $a\in (0,1)$. There exist $C_1\geq 1$,   $\varepsilon_1\in (0,1)$ such that  if 
		$|||f_0|||_{1+a}<\infty$ and the problem \eqref{eq2} has a unique solution $(f,U)$ in $[0,T]$ for some $T<\infty$ with 
			\begin{equation}\label{z1}
				\|\rho\|_{1+a,T}+\|U\|_{1+a,T}\leq \varepsilon_1.
			\end{equation}
			Then, we have 
			\begin{equation*}
				\|\rho\|_{1+a,T}+	\|U\|_{1+a,T}\leq C_1|||f_0|||_{1+a}.
			\end{equation*}
		\end{proposition}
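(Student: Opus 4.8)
The plan is to treat Proposition \ref{propoglobal} as a pure \emph{a priori} estimate: existence and uniqueness of $(f,U)$ on $[0,T]$ are given (the local theory, Proposition \ref{propolocal}), and what remains is to upgrade the bootstrap bound \eqref{z1} by closing a nonlinear estimate on the density equation \eqref{equation rho}. First I would check that \eqref{z1} places us in the regime where the earlier machinery applies. From \eqref{z1} one has $\|\rho(t)\|_{L^1\cap L^\infty}\le 2\|\rho\|_{1+a,T}\le 2\varepsilon_1\le\tilde\varepsilon_0$ for every $t\in[0,T]$ (after shrinking $\varepsilon_1$), so Lemma \ref{lem0} applies at each time, $U(t)=\mathcal N(\rho(t))$ and $\mathfrak g(t)=\rho(t)+A(\mathcal N(\rho)(t))$, and \eqref{zzz18} — with a short interpolation to pass from the $\dot B^{a/2}_{p,\infty}$ bounds on $(U,A(U))$, $\nabla(U,A(U))$, $\nabla^2(U,A(U))$ to the $\dot B^{a}_{p,\infty}$ bounds on $U,\nabla U$ and $A(U),\nabla A(U)$ that enter $\|\cdot\|_{1+a,T}$ — gives $\|U\|_{1+a,T}+\|A(U)\|_{1+a,T}\lesssim\|\rho\|_{1+a,T}$. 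In particular $\|\mathfrak g\|_{1+a,T}\le C\varepsilon_1\le\varepsilon_0$, which licenses Proposition \ref{estimates about Y and Wxx} for the characteristics $(Y_{s,t},W_{s,t})$ attached to $\mathbf E=-\nabla(-\Delta+1)^{-1}\mathfrak g$ and Proposition \ref{ProG} for $\mathcal G=G*_{(t,x)}$. Moreover, since $A(r)=\mathcal O(r^2)$ and $\|U\|_{L^\infty_{t,x}}\lesssim\varepsilon_1$, a composition estimate sharpens the bound on $A(U)$ to $\|A(U)\|_{1+a,T}\lesssim\varepsilon_1\|\rho\|_{1+a,T}$, which is what makes the final absorption possible.

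Next I would apply $\|\cdot\|_{1+a,T}$ to \eqref{equation rho} and invoke Proposition \ref{ProG} on the terms carrying $\mathcal G$, getting
\begin{equation*}
\|\rho\|_{1+a,T}\le (C+1)\big(\|\mathcal I_{f_0}(\mathfrak g)\|_{1+a,T}+\|\mathcal R(\mathfrak g)\|_{1+a,T}\big)+C\|A(U)\|_{1+a,T}.
\end{equation*}
The bound $\|\mathcal I_{f_0}(\mathfrak g)\|_{1+a,T}\lesssim|||f_0|||_{1+a}$ is the initial-data estimate of Section 4: one splits $f_0(X_{0,t},V_{0,t})$ into the free-streaming piece $f_0(x-tv,v)$, whose $\|\cdot\|_{1+a,T}$-norm is controlled by a dispersive change of variables in $v$ (trading spatial increments for velocity increments of size $|\alpha|/t$ and gaining the streaming Jacobian $t^{-2}$) acting on the $L^1_xL^\infty_v$ and Hölder components of $|||f_0|||_{1+a}$, plus a remainder $f_0(x-tv+Y_{0,t},v+W_{0,t})-f_0(x-tv,v)$ bounded by $\mathcal D^a f_0$ against the smallness of $(Y_{0,t},W_{0,t})$ from \eqref{z4}--\eqref{definition Y W} and Proposition \ref{estimates about Y and Wxx}. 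The bound $\|\mathcal R(\mathfrak g)\|_{1+a,T}\lesssim\|\mathfrak g\|_{1+a,T}^2$ is the reaction estimate of Section 4: the integrand in \eqref{I R} telescopes as
\begin{equation*}
\big(E(s,x-(t-s)v)-E(s,X_{s,t})\big)\cdot\nabla_v\mu(v)+E(s,X_{s,t})\cdot\big(\nabla_v\mu(v)-\nabla_v\mu(V_{s,t})\big),
\end{equation*}
the first bracket carrying the factor $x-(t-s)v-X_{s,t}=-Y_{s,t}$ and the second the factor $\nabla_v\mu(v)-\nabla_v\mu(v+W_{s,t})$, both $\mathcal O(\|\mathfrak g\|_{1+a,T})$ by Proposition \ref{estimates about Y and Wxx}, while $E$ itself is linear in $\mathfrak g$; hence $\mathcal R(\mathfrak g)$ is quadratic in $\mathfrak g$.

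Finally, combining the above with $\|\mathfrak g\|_{1+a,T}\lesssim\|\rho\|_{1+a,T}$ and $\|A(U)\|_{1+a,T}\lesssim\varepsilon_1\|\rho\|_{1+a,T}$ gives
\begin{equation*}
\|\rho\|_{1+a,T}\le C_*|||f_0|||_{1+a}+C_*\varepsilon_1\|\rho\|_{1+a,T}.
\end{equation*}
Choosing $\varepsilon_1$ small enough that $C_*\varepsilon_1\le\frac12$ (and also small enough that the side conditions $2\varepsilon_1\le\tilde\varepsilon_0$, $C\varepsilon_1\le\varepsilon_0$, $\varepsilon_1\le\varepsilon_0$ used above hold) lets one absorb the last term, so $\|\rho\|_{1+a,T}\le 2C_*|||f_0|||_{1+a}$; then $\|U\|_{1+a,T}\lesssim\|\rho\|_{1+a,T}\lesssim|||f_0|||_{1+a}$ by Lemma \ref{lem0}, which is the claim with $C_1$ the resulting constant. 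I expect the reaction term $\mathcal R(\mathfrak g)$ to be the main obstacle: extracting a genuinely quadratic (equivalently $\varepsilon_1$-small) bound for it in the critical norm $\|\cdot\|_{1+a,T}$ in dimension $d=2$ is exactly where the sharp pointwise characteristics estimates of Proposition \ref{estimates about Y and Wxx} are indispensable — above all its critical first term, the $s$-undecaying control of the velocity Hölder seminorm of $\nabla_vY_{s,t}$ in \eqref{highestresults of Y smallxx} — because without adequate decay of $\mathfrak g$ (hence of $E$ and of the characteristics) the $v$-integration in \eqref{I R} fails to converge with the required space-time weights; this is precisely the obstruction that breaks the scheme in dimension $d=1$, as flagged in the remark after Theorem \ref{Thmmain}.
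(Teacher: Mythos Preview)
Your proposal is correct and follows essentially the same bootstrap scheme as the paper: bound $\|\mathfrak g\|_{1+a,T}$ by $\|\rho\|_{1+a,T}$, invoke Proposition~\ref{ProG} on \eqref{equation rho}, estimate $\mathcal I_{f_0}$ by $|||f_0|||_{1+a}$ (Lemma~\ref{I a estimate}) and $\mathcal R$ superlinearly in $\|\mathfrak g\|_{1+a,T}$ (Lemma~\ref{esT}), then absorb. Two small discrepancies are worth noting. First, the reaction bound actually obtained in the paper is $\|\mathcal R(\mathfrak g)\|_{1+a,T}\lesssim\|\mathfrak g\|_{1+a,T}^{1+a}$, not $\|\mathfrak g\|_{1+a,T}^{2}$: your telescope heuristic gives the quadratic gain for the $L^p$ pieces of the norm, but in the $\dot B^a_{p,\infty}$ pieces one only extracts $\|\nabla_x Y\|_{L^\infty}^a\lesssim\|\mathfrak g\|_{1+a,T}^a$ from the spatial increment of $X_{s,t}$ (this is exactly the factor $\|\mathfrak g\|_{1+a,T}^a$ in \eqref{z12}--\eqref{z12b}); the absorption argument is of course unaffected, with $\varepsilon_1^a$ replacing your $\varepsilon_1$. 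Second, to bound $\|U\|_{1+a,T}$ the paper does not interpolate from \eqref{zzz18} but simply writes $U=(-\Delta+1)^{-1}(\rho+A(U))$ and uses $\|A(U)\|_{1+a,T}\lesssim\|U\|_{1+a,T}^2$ to close $\|U\|_{1+a,T}\lesssim\|\rho\|_{1+a,T}$; your interpolation route is a legitimate alternative.
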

	Here is the local well-posedness result. 
\begin{proposition} \label{propolocal}
	Let $a\in (0,1)$. There exists $\varepsilon_2>0$ such that if $\rho(0,x)=\int_{\mathbb{R}^2}f_0(x,v)dv$ satisfies 
	\begin{align}
		\label{zzz5}
		&\sum_{p=1,\infty} ||\rho(0)||_{L^p}+ ||
		\nabla \rho(0)||_{\dot B^{a}_{p,\infty}}\leq \varepsilon_2,\\&
		\label{f_0condition}
		||| f_0|||_{1+a}<\infty, \quad
		\lim_{\kappa\to 0}||| f_0-f_0^\kappa|||_{1+a}=0,
	\end{align}
	for some sequence $f_0^\kappa\in C^\infty_c(\mathbb{R}^2_x\times \mathbb{R}_v^2)$. Then, the problem \eqref{eq2} has a unique local solution $f$ in $[0,T] $ with 
	\begin{align}
		\label{zzz8}
	&	\|\rho\|_{1+a,T}+ \|U\|_{1+a,T}\leq C_2\left(\sum_{p=1,\infty} ||\rho(0)||_{L^p}+ ||
	\nabla \rho(0)||_{\dot B^{a}_{p,\infty}}\right),\\&
	\label{zzz77}
	\lim_{t\to 0^+} \left(\|\rho-\rho(0)\|_{1+a,t}+ \|U-U(0)\|_{1+a,t}\right)=0,
	\end{align}
	for some $T\in (0,1)$ where $C_2\geq 1$  only depends on $a$ and $C_A$.
\end{proposition}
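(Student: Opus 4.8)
The plan is to reformulate \eqref{eq2} as the fixed‑point equation \eqref{equation rho} for the density $\rho$ alone and to solve it by a contraction argument on a short interval $[0,T]$. As long as $\rho(t)\in\mathcal V_{\tilde\varepsilon_0}$ for every $t\in[0,T]$, Lemma \ref{lem0} expresses the remaining unknowns through $\rho$, namely $U=\mathcal N(\rho)$, $\mathfrak g=\rho+A(\mathcal N(\rho))$, $E=-\nabla(-\Delta+1)^{-1}\mathfrak g$; then the characteristics $(X_{s,t},V_{s,t})$ of \eqref{X and V}--\eqref{definition Y W} and the functionals $\mathcal I_{f_0}(\mathfrak g)$, $\mathcal R(\mathfrak g)$ of \eqref{I R} are determined as well, so \eqref{equation rho} reads $\rho=\Psi(\rho)$ with
\[
\Psi(\rho)=\big(\mathrm{Id}+G*_{(t,x)}\big)\big[\mathcal I_{f_0}(\mathfrak g)+\mathcal R(\mathfrak g)\big]+G*_{(t,x)}\big[A(\mathcal N(\rho))\big].
\]
I would seek a fixed point of $\Psi$ in the closed ball $\mathcal B_{R,T}=\{\rho:\ \|\rho\|_{1+a,T}\le R,\ \rho(t)\in\mathcal V_{\tilde\varepsilon_0}\ \forall t\in[0,T]\}$, with $R$ comparable to the right‑hand side of \eqref{zzz8}, using the contraction principle for the weaker metric $d(\rho_1,\rho_2)=\sup_{[0,T]}\|\rho_1(t)-\rho_2(t)\|_{L^1\cap L^\infty}$; the ball being $d$‑closed, its unique $d$‑fixed point automatically inherits the $\|\cdot\|_{1+a,T}$‑bound. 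I would run the construction first for $f_0\in C^\infty_c$, where the displacement maps and the change of variables $v\mapsto V_{0,t}(x,v)$ are harmless and $t\mapsto\mathcal I_{f_0}(\mathfrak g)(t)$ is strongly continuous at $t=0^+$, and then use the approximation hypothesis \eqref{f_0condition} to pass to general $f_0$.

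The self‑mapping property rests on four inputs. First, Lemma \ref{lem0}: for $\|\rho\|_{1+a,T}\le R$ small, $\rho(t)\in\mathcal V_{\tilde\varepsilon_0}$, $\|\mathfrak g\|_{1+a,T}\lesssim\|\rho\|_{1+a,T}$, and $\|A(\mathcal N(\rho))\|_{1+a,T}\lesssim\|\rho\|_{1+a,T}^2$ (using $A(r)=\mathcal O(r^2)$). Second, Proposition \ref{ProG}: $\|G*_{(t,x)}h\|_{1+a,T}\le C\|h\|_{1+a,T}$. Third, the characteristic estimates of Section 3 — of which Proposition \ref{estimates about Y and Wxx} is the borderline case, and which also supply the $\dot B^a$‑type control of $\nabla_{x}Y_{s,t}$, $\nabla_{x,v}Y_{s,t}$, $\nabla_{x,v}W_{s,t}$ needed for Besov norms of $\mathcal I_{f_0}$ — which on a short interval give $\|Y_{s,t}\|$, $\|W_{s,t}\|$ and their gradients $\lesssim T^{\sigma}\|\mathfrak g\|_{1+a,T}$ for some $\sigma>0$. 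Fourth, the Section 4 estimates for the two remaining terms: since $\mathcal R(\mathfrak g)$ vanishes when $E\equiv0$ and is an $\int_0^t$‑integral of differences pointwise bounded by $(|Y_{s,t}|+|W_{s,t}|)$ times $\nabla_v\mu$‑weights, it is at once quadratic in $\rho$ and carries a power of $T$, so $\|\mathcal R(\mathfrak g)\|_{1+a,T}\lesssim T^{\sigma}(\|\rho\|_{1+a,T}^2+\|\rho\|_{1+a,T}^3)$; and for the initial‑data term, writing $\mathcal I_{f_0}(\mathfrak g)(t)=\rho(0)+\big(\mathcal I_{f_0}(\mathfrak g)(t)-\rho(0)\big)$ with $\mathcal I_{f_0}(\mathfrak g)(t,x)-\rho(0,x)=\int_{\mathbb R^2}\big(f_0(x-tv+Y_{0,t},v+W_{0,t})-f_0(x,v)\big)\,dv$ built only from the $O(t)+O(\|\mathfrak g\|_{1+a,T})$ displacements of \eqref{z4}, one gets
\[
\|\mathcal I_{f_0}(\mathfrak g)\|_{1+a,T}\le C\Big(\sum_{p=1,\infty}\|\rho(0)\|_{L^p}+\|\nabla\rho(0)\|_{\dot B^a_{p,\infty}}\Big)+\omega_{f_0}(T)+CT^{\sigma}\big(1+\|\mathfrak g\|_{1+a,T}\big),
\]
the leading term accounting for the $t=0$ value (the intermediate $\dot B^a_{p,\infty}$ norms by interpolation between $L^p$ and $\dot B^{1+a}_{p,\infty}$) and $\omega_{f_0}(T)\to0$ as $T\to0$. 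Collecting these, $\|\Psi(\rho)\|_{1+a,T}\le C_2\big(\sum_p\|\rho(0)\|_{L^p}+\|\nabla\rho(0)\|_{\dot B^a_{p,\infty}}\big)+C\big(\omega_{f_0}(T)+T^{\sigma}R+R^2\big)$; I would set $R=2C_2\big(\sum_p\|\rho(0)\|_{L^p}+\|\nabla\rho(0)\|_{\dot B^a_{p,\infty}}\big)$, shrink $\varepsilon_2$ so that $R$ lies below every smallness threshold above and $CR^2\le\tfrac16R$, and then shrink $T$ so that $CT^{\sigma}R\le\tfrac16R$ and $C\omega_{f_0}(T)\le\tfrac16R$, obtaining $\Psi(\mathcal B_{R,T})\subset\mathcal B_{R,T}$ and hence \eqref{zzz8}.

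For the contraction I would estimate $\Psi(\rho_1)-\Psi(\rho_2)$ in $d$: Lemma \ref{lem0} gives $\|\mathcal N(\rho_1)-\mathcal N(\rho_2)\|\lesssim d(\rho_1,\rho_2)$, hence $\|\mathfrak g_1-\mathfrak g_2\|\lesssim d(\rho_1,\rho_2)$ and $\|A(\mathcal N(\rho_1))-A(\mathcal N(\rho_2))\|\lesssim R\, d(\rho_1,\rho_2)$; a Gronwall argument for the two flows driven by $E_1-E_2=-\nabla(-\Delta+1)^{-1}(\mathfrak g_1-\mathfrak g_2)$ controls the difference of characteristics; and the same $T^{\sigma}$‑gains as above yield $d(\Psi(\rho_1),\Psi(\rho_2))\le\tfrac12 d(\rho_1,\rho_2)$ after one further shrinking of $T$, which also gives uniqueness. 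The continuity statement \eqref{zzz77} follows by feeding the solution back into $\rho(t)-\rho(0)=G*_{(t,x)}[\mathcal I_{f_0}(\mathfrak g)+\mathcal R(\mathfrak g)+A(\mathcal N(\rho))](t)+\big(\mathcal I_{f_0}(\mathfrak g)(t)-\rho(0)\big)+\mathcal R(\mathfrak g)(t)$ and letting $t\to0^+$: the $G$‑convolution tends to $0$ since $G$ is supported in positive times, $\mathcal R(\mathfrak g)(t)\to0$ since $\mathcal R(\mathfrak g)(0)=0$, and $\mathcal I_{f_0}(\mathfrak g)(t)-\rho(0)\to0$ in $\|\cdot\|_{1+a,t}$ by the splitting above, so $\|\rho-\rho(0)\|_{1+a,t}+\|U-U(0)\|_{1+a,t}\lesssim\omega_{f_0}(t)+Ct^{\sigma}$. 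For general $f_0$ one takes $f_0^\kappa\in C^\infty_c$ with $|||f_0-f_0^\kappa|||_{1+a}\to0$; then $|||f_0^\kappa|||_{1+a}$ is bounded and $\rho^\kappa(0)=\int f_0^\kappa\,dv$ converges to $\rho(0)$ in the norms of \eqref{zzz5}, so the construction runs on a common interval $[0,T]$ with a uniform radius, the $\rho^\kappa$ are $d$‑Cauchy (the extra term being $\|\mathcal I_{f_0^\kappa}(\mathfrak g)-\mathcal I_{f_0^{\kappa'}}(\mathfrak g)\|_{1+a,T}\lesssim|||f_0^\kappa-f_0^{\kappa'}|||_{1+a}$), and the limit lies in $\mathcal B_{R,T}$, solves \eqref{equation rho} with datum $f_0$, and satisfies \eqref{zzz8}--\eqref{zzz77}.

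The crux is the initial‑data estimate. Unlike $\mathcal R(\mathfrak g)$ and $A(\mathcal N(\rho))$, the term $\mathcal I_{f_0}(\mathfrak g)$ is not perturbatively small, so one must show that the transport‑averaged datum $\int f_0(x-tv+Y_{0,t},v+W_{0,t})\,dv$ stays close to $\rho(0)$ for short times \emph{in the full $\|\cdot\|_{1+a,T}$ norm} — including the $\dot B^a_{p,\infty}$ and $\nabla\,\dot B^a_{p,\infty}$ pieces — and is controlled only by the norms of $\rho(0)$ listed in \eqref{zzz5}. This is precisely where the $C^\infty_c$ regularization, the fine weighted structure of $\|\cdot\|_{1+a,T}$ and $|||\cdot|||_{1+a}$, and the $\dot B^a$‑Hölder bounds on the characteristics (Proposition \ref{estimates about Y and Wxx} and its Section 3 companions) all enter essentially; obtaining $\omega_{f_0}$ and $T$ uniformly along the approximating sequence requires a correspondingly careful version of this estimate, and is the main technical obstacle.
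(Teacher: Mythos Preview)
Your proposal is correct and follows the same fixed-point strategy as the paper: recast \eqref{eq2} as \eqref{equation rho}, exploit short-time smallness, and use the approximation $f_0^\kappa$ for the initial-data term. Two technical choices differ. First, the paper runs both the self-map and the contraction in the \emph{lower} norm $\|\cdot\|_{a,T_0}$ (no gradient), and only afterwards upgrades to the full $\|\cdot\|_{1+a,T_0}$ bound via the identity $\rho=\mathcal J(\rho)$; this spares you from tracking the $\nabla$-Besov pieces of the characteristics inside the fixed-point loop. Second, rather than first solving for $f_0\in C^\infty_c$ and then passing to the limit, the paper handles the crux you flag in one stroke by the splitting
\[
\|\mathcal J(\rho)-\rho(0)\|_{1+a,T_0}\;\lesssim\; T_0\big(C(f_0^\kappa)+|||f_0|||_{1+a}\big)+|||f_0-f_0^\kappa|||_{1+a},
\]
obtained from $\|\mathcal J(\rho)-\mathcal I_{f_0}(\mathfrak g)\|+\|\mathcal I_{f_0-f_0^\kappa}(\mathfrak g)\|+\|\mathcal I_{f_0^\kappa}(\mathfrak g)-\mathcal I_{f_0^\kappa}(0)\|+\|\mathcal I_{f_0^\kappa}(0)-\rho^\kappa(0)\|+\|\rho^\kappa(0)-\rho(0)\|$, where the $T_0$ factor on the $G\!*$ piece comes from $\int_0^{T_0}\|G(t)\|_{L^1}\,dt\lesssim T_0$. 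One then fixes $\kappa$ to make the second term small and \emph{afterwards} shrinks $T_0$ (allowing $T_0$ to depend on $C(f_0^\kappa)$); this bypasses the ``uniform $\omega_{f_0}(T)$ along the approximating sequence'' difficulty you identify.
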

With  Proposition \ref{propoglobal}  and Proposition \ref{propolocal} in hand,  we can obtain Theorem \ref{Thmmain}. \\
		\begin{proof}[Proof of Theorem \ref{Thmmain}] By \eqref{zzz16}, one has 
				\begin{equation}\label{zzz17}
					\sum_{p=1,\infty}	||\rho(0)||_{L^p}+	||
					\nabla \rho(0)||_{\dot B^{a}_{p,\infty}}\leq C_3 |||f_0|||_{1+a},
				\end{equation}
			for some $C_3\geq 1$. Let $C_1\geq 1$,   $\varepsilon_1\in (0,1)$ be in  Proposition \ref{propoglobal} and $C_2\geq 1$,   $\varepsilon_2\in (0,1)$ be in  Proposition  \ref{propolocal} 
$$T^*=\sup\left\{T: \text{the solution $f$ of \eqref{eq2}  exists on} ~~[0,T], ~~\text{s.t}~	\|\rho\|_{1+a,T}+	\|U\|_{1+a,T}\leq M|||f_0|||_{1+a}\right\},$$
		with $M=100C_1C_2C_3$.
	Assume
		\begin{equation}\label{zzz6}
			|||f_0|||_{1+a}\leq \frac{\min\{\varepsilon_1,\varepsilon_2\}}{100C_1C_2C_3}.
		\end{equation}
So, thanks to Proposition \ref{propolocal}, the problem \eqref{eq2} has a unique local solution $f$ in $[0,T] $ satisfying
\begin{equation*}
		\|\rho\|_{1+a,T}+ \|U\|_{1+a,T}\leq C_2\left(\sum_{p=1,\infty} ||\rho(0)||_{L^p}+ ||
	\nabla \rho(0)||_{\dot B^{a}_{p,\infty}}\right)\overset{\eqref{zzz17}}\leq M|||f_0|||_{1+a}.
\end{equation*}
This gives $T^*>0.$ Now we suppose that $T^*<\infty$. By \eqref{zzz6} one has $	\|\rho\|_{1+a,T^\star}+	\|U\|_{1+a,T^\star}\leq \varepsilon_1.$ So, we can apply Proposition  \ref{propoglobal} to  get that 
\begin{equation}
	\label{zzz10}\|\rho\|_{1+a,T^\star}+	\|U\|_{1+a,T^\star}\leq C_1|||f_0|||_{1+a}.
\end{equation}
	In particular, 
$$
		\sum_{p=1,\infty}	||\rho(T^\star)||_{L^p}+	||
		\nabla \rho(T^\star)||_{\dot B^{a}_{p,\infty}}\leq C_1|||f_0|||_{1+a}\leq  \varepsilon_2.$$
Moreover, by Remark  \ref{Re} one has $|||f(T^\star)|||_{1+a}<\infty $ and 	$\lim_{\kappa\to 0}||| f(T^\star)-f_{T^\star}^\kappa|||_{1+a}=0$
for some sequence $f_{T^\star}^\kappa\in C^\infty_c(\mathbb{R}^2_x\times \mathbb{R}_v^2)$.
Hence, by Proposition \ref{propolocal},  the solution $f$ can be extended from  $[0,T^\star]$ to  $[0,T^\star+\delta]$ for some $\delta>0$  satisfying 
\begin{align*}
\|\rho\|_{1+a,T^*+\delta}+\|U\|_{1+a,T^*+\delta}\leq 2(\|\rho\|_{1+a,T^*}+\|U\|_{1+a,T^*}),
\end{align*}
since \eqref{zzz77}. 
Combining this with \eqref{zzz10} to get that 
\begin{equation}\|\rho\|_{1+a,T^\star+\delta}+	\|U\|_{1+a,T^\star+\delta}\leq 2C_1|||f_0|||_{1+a}<M|||f_0|||_{1+a},
\end{equation}
this contradicts to $T^\star<\infty$.
Therefore, $T^\star=\infty$ and  the proof is complete. 
		\end{proof}
\section{Pointwise estimates of characteristics}
In this section, we will prove the following estimates of characteristics that it will be used in the proof of Proposition \ref{propoglobal} and \ref{propolocal}. 

\begin{proposition}\label{estimates about Y and W} Let $(Y_{s,t},W_{s,t})$ be in \eqref{definition Y W}. Let $a\in (0,1)$, then there exists $\varepsilon_0\in(0,1)$ such that  for any $\|\mathfrak{g}\|_{1+a,T}\leq \varepsilon_0$, we have the following estimates:
	\begin{align}\nonumber
		&\sup_{0\leq s\leq t\leq T}\left(	\langle s\rangle\|Y_{s,t}\|_{L^\infty_{x,v}}+\langle s\rangle^{1+a}\|\nabla_x Y_{s,t}\|_{L^\infty_{x,v}}+\langle s\rangle^{1+a}\sup_{\alpha}	\frac{	\|\delta_{\alpha}^x	\nabla_xY_{s,t}\|_{L^\infty_{x,v}}}{|\alpha|^a}\right)\\&\quad\quad\quad\quad\quad\quad\quad\quad\quad\quad\quad\quad\quad\quad\quad\quad\quad\quad\quad\quad\quad+	\sup_{0\leq s\leq t\leq T}\langle s\rangle^{a}\|\nabla_v Y_{s,t}\|_{L^\infty_{x,v}}\lesssim_a \|\mathfrak{g}\|_{1+a,T},\label{results of Y small}
		\\& \nonumber
		\sup_{0\leq s\leq t\leq T}\left( \langle s\rangle^{2}	\|W_{s,t}\|_{L^\infty_{x,v}}+\langle s\rangle^{2+a}\|\nabla_x W_{s,t}\|_{L^\infty_{x,v}}+\langle s\rangle^{2+a}\sup_{\alpha}	\frac{	\|\delta_{\alpha}^x	\nabla_xW_{s,t}\|_{L^\infty_{x,v}}}{|\alpha|^a}\right)\\&\quad\quad\quad+	\sup_{0\leq s\leq t\leq T}\langle s\rangle^{1+a}\|\nabla_v W_{s,t}(x,v)\|_{L^\infty_{x,v}}+	\sup_{0\leq s\leq t\leq T}\langle s\rangle\sup_{\alpha}	\frac{	\|\delta_{\alpha}^v	\nabla_vW_{s,t}\|_{L^\infty_{x,v}}}{|\alpha|^a}\lesssim_a \|\mathfrak{g}\|_{1+a,T},\label{results of W small}
	\end{align}
	and
	\begin{align}	\sup_{0\leq s\leq t\leq T}\sup_{\alpha}	\frac{	\|\delta_{\alpha}^v	\nabla_vY_{s,t}\|_{L^\infty_{x,v}}}{|\alpha|^{a}}\lesssim_a \|\mathfrak{g}\|_{1+a,T}.\label{highestresults of Y small}
	\end{align}
	Moreover, for any $0\leq s\leq t< T$, we have a $C^{1}$ map~
	$
	(x,v)\mapsto\Psi_{s,t}(x,v),
	$
	which satisfies for all $x,v\in \mathbb{R}^2$:
	\begin{equation}\label{X(x,Psi)}
		X_{s,t}(x,\Psi_{s,t}(x,v))=x-(t-s)v,\qquad	
	\end{equation}
	and 
	\begin{equation}\label{z3}
		\langle s\rangle^{2}	\left|	\Psi_{s,t}(x,v)-v\right|+	\langle s\rangle^{2+a}	\left|	\nabla_x\Psi_{s,t}(x,v)\right|+\langle s\rangle^{1+a}	\left|	\nabla_v\left(\Psi_{s,t}(x,v)-v\right)\right|\lesssim_a \|\mathfrak{g}\|_{1+a,T}.
	\end{equation}
	Here we use the notations 
	\begin{align*}
		\delta_{\alpha}^xY_{s,t}(x,v)=Y_{s,t}(x,v)-Y_{s,t}(x-\alpha,v),~~~\delta_{\alpha}^vY_{s,t}(x,v)=Y_{s,t}(x,v)-Y_{s,t}(x,v-\alpha).
	\end{align*}
\end{proposition}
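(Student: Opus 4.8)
The plan is to set up a closed system of a priori estimates for the maps $(Y_{s,t},W_{s,t})$ defined by the fixed-point relations \eqref{definition Y W}, exploiting the decay of $\mathbf{E}$ that follows from $\|\mathfrak{g}\|_{1+a,T}\le\varepsilon_0$ via $\mathbf{E}=-\nabla(-\Delta+1)^{-1}\mathfrak{g}$. First I would record the pointwise and Hölder bounds on $\mathbf{E}$: from the definition of $\|\cdot\|_{1+a,T}$ one has, for $p=1,\infty$, $\langle s\rangle^{2(p-1)/p}\|\mathbf{E}(s)\|_{L^p}+\langle s\rangle^{2(p-1)/p+a}\|\mathbf{E}(s)\|_{\dot B^a_{p,\infty}}\lesssim\|\mathfrak{g}\|_{1+a,T}$ together with the corresponding $\nabla_x\mathbf{E}$ estimates (the Bessel potential $(-\Delta+1)^{-1}$ gains two derivatives, so $\nabla_x\mathbf{E}$ is as regular as $\mathfrak{g}$ up to one derivative). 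The key quantitative feature is that $\|\mathbf{E}(\tau)\|_{L^\infty_x}\lesssim\langle\tau\rangle^{-2}\|\mathfrak{g}\|_{1+a,T}$, so that $\int_0^t(\tau-s)\|\mathbf{E}(\tau)\|_{L^\infty}\,d\tau$ is bounded by $\langle s\rangle^{-1}\|\mathfrak{g}\|_{1+a,T}$ after splitting the integral at $\tau\sim 2s$; this is exactly what produces the $\langle s\rangle^{-1}$ gain for $Y$ and the $\langle s\rangle^{-2}$ gain for $W$ claimed in \eqref{results of Y small}–\eqref{results of W small}.

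Next I would run a contraction/continuity argument on the Picard iteration for $(Y_{s,t},W_{s,t})$ in a suitable weighted norm built from all the quantities appearing on the left-hand sides of \eqref{results of Y small}, \eqref{results of W small}, \eqref{highestresults of Y small}: the $L^\infty$ sizes with their $\langle s\rangle$-weights, the first $x$- and $v$-derivatives with their weights, and the two Hölder seminorms $\sup_\alpha\|\delta^x_\alpha\nabla_x Y\|/|\alpha|^a$, $\sup_\alpha\|\delta^v_\alpha\nabla_v W\|/|\alpha|^a$. Each bound is obtained by differentiating \eqref{definition Y W} under the integral sign and using Grönwall in $s$ (going backward from $t$): e.g. $\nabla_v Y_{s,t}=\int_s^t(\tau-s)\nabla_x\mathbf{E}(\tau,x+\tau v+Y_{\tau,t})\cdot(\tau I+\nabla_v Y_{\tau,t})\,d\tau$, and the factor $\tau\|\nabla_x\mathbf{E}(\tau)\|_{L^\infty}\lesssim\tau\langle\tau\rangle^{-2-a}$ is integrable, yielding the $\langle s\rangle^{-a}$ weight; for the $v$-Hölder seminorm of $\nabla_v Y$ one moreover uses the $\dot B^a$-regularity of $\nabla_x\mathbf{E}$ plus the already-controlled $\langle s\rangle^{-a}$-bound on $\nabla_v Y$ itself to close. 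The $C^1$ inverse map $\Psi_{s,t}$ and the estimate \eqref{z3} then come from the implicit function theorem applied to $w\mapsto X_{s,t}(x,w)$: by \eqref{z4}, $X_{s,t}(x,w)=x-(t-s)w+Y_{s,t}(x-wt,w)$, whose $w$-derivative is $-(t-s)I+O(\|\mathfrak{g}\|_{1+a,T})$, invertible for $\varepsilon_0$ small, and $\Psi_{s,t}(x,v)-v$ is read off by writing $X_{s,t}(x,\Psi)=x-(t-s)v$ and estimating the resulting fixed-point equation for $\Psi-v$ with the bounds on $Y$; differentiating gives the $\nabla_x,\nabla_v$ parts of \eqref{z3}.

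The main obstacle I anticipate is the \emph{criticality of the two Hölder seminorms}, especially $\sup_\alpha\langle s\rangle\|\delta^v_\alpha\nabla_v W_{s,t}\|_{L^\infty}/|\alpha|^a$ (the quantity isolated as "critical" in the Remark after Proposition \ref{estimates about Y and Wxx}). The difficulty is that when one estimates $\delta^v_\alpha\nabla_v W$ by differencing the integral representation in $v$, the chain rule forces a term in which $\nabla_x\mathbf{E}$ is evaluated at two nearby points $x+\tau v+Y_{\tau,t}(x-vt,v)$ and $x+\tau(v-\alpha)+Y_{\tau,t}(x-(v-\alpha)t,v-\alpha)$ whose separation is $\sim\tau|\alpha|$, so the naive bound $\|\delta^x_{\tau\alpha}\nabla_x\mathbf{E}(\tau)\|_{L^\infty}\lesssim(\tau|\alpha|)^a\langle\tau\rangle^{-2-a}$ only gives a factor $|\alpha|^a\langle\tau\rangle^{-2}$, whose $\tau$-integral against the weight $(\tau-s)$ behaves like $\log$ rather than a clean power — one must instead interpolate between this and the $L^\infty$ bound on $\nabla_x\mathbf{E}$, or more precisely exploit that for the range $\tau|\alpha|\gtrsim 1$ one uses the plain $L^\infty$ bound (no Hölder gain, but then $|\alpha|^a\ge\tau^{-a}$ is favorable) and for $\tau|\alpha|\lesssim 1$ the Hölder bound, splitting the $\tau$-integral at $\tau\sim|\alpha|^{-1}$. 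Carrying this splitting through while simultaneously feeding back the already-closed weighted bounds on $Y,W$ and their first derivatives (to control the discrepancy between the two base points beyond its leading $\tau\alpha$ part) is the delicate bookkeeping step; everything else is a routine Grönwall iteration once the decay budget of $\mathbf{E}$ is fixed.
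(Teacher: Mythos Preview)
Your overall architecture---weighted Gr\"onwall bounds on $(Y,W)$ driven by decay of $\mathbf{E}$, followed by the implicit function theorem for $\Psi_{s,t}$---matches the paper. However, there is a genuine quantitative gap in the input: you assert $\|\mathbf{E}(\tau)\|_{L^\infty}\lesssim\langle\tau\rangle^{-2}\|\mathfrak{g}\|_{1+a,T}$ and claim this yields $\int_s^t(\tau-s)\|\mathbf{E}(\tau)\|_{L^\infty}\,d\tau\lesssim\langle s\rangle^{-1}$. It does not: with $\langle\tau\rangle^{-2}$ decay the integrand behaves like $\tau^{-1}$ for large $\tau$, so the integral grows like $\log t$ (and the piece on $[s,2s]$ is $O(1)$, not $O(\langle s\rangle^{-1})$). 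Thus $\|Y_{s,t}\|_{L^\infty}\lesssim\langle s\rangle^{-1}$ is not obtained, and every subsequent weighted bound in \eqref{results of Y small}--\eqref{highestresults of Y small} fails to close.

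What the paper actually uses (via \cite[Lemma 4.2]{HNX1}) is the sharper estimate
\[
\langle\tau\rangle^{3}\|\mathbf{E}(\tau)\|_{L^\infty}+\langle\tau\rangle^{3+a}\|\nabla_x\mathbf{E}(\tau)\|_{L^\infty}+\langle\tau\rangle^{3+a}\|\nabla_x^2\mathbf{E}(\tau)\|_{L^\infty}\lesssim\|\mathfrak{g}\|_{1+a,T},
\]
an extra full power of $\langle\tau\rangle^{-1}$ beyond the naive bound $\|\nabla(-\Delta+1)^{-1}\mathfrak{g}\|_{L^\infty}\lesssim\|\mathfrak{g}\|_{L^\infty}$. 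Obtaining it requires the \emph{full} norm $\|\mathfrak{g}\|_{1+a,T}$, not just the $L^\infty$ part: at frequencies $2^j\gtrsim\langle\tau\rangle^{-1}$ one feeds in $\|P_j\mathfrak{g}\|_{L^\infty}\lesssim 2^{-(1+a)j}\|\nabla\mathfrak{g}\|_{\dot B^a_{\infty,\infty}}\lesssim 2^{-(1+a)j}\langle\tau\rangle^{-3-a}$, while at frequencies $2^j\lesssim\langle\tau\rangle^{-1}$ one uses Bernstein together with $\|\mathfrak{g}\|_{L^1}\lesssim 1$; the low-frequency vanishing of the symbol $i\xi(1+|\xi|^2)^{-1}$ then produces the $\langle\tau\rangle^{-3}$ rate. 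This is precisely where the two-dimensional criticality hides: omit the Besov information on $\nabla\mathfrak{g}$ and you lose one power of $\langle\tau\rangle$, which is fatal.

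Two smaller points. First, the bound singled out as critical in the paper is $\sup_{s,t}\sup_\alpha|\alpha|^{-a}\|\delta^v_\alpha\nabla_vY_{s,t}\|_{L^\infty}$ (no $\langle s\rangle$-weight), not the $W$ quantity you name; it is the only estimate in Proposition~\ref{estimates about Y and W} for which the paper writes out the argument rather than deferring to \cite{HNX1}. Second, for that term the paper does not use the $\dot B^a$ regularity of $\nabla_x\mathbf{E}$ but rather the $L^\infty$ bound on $\nabla_x^2\mathbf{E}$, controlling the relevant difference by $\min\{\|\nabla_x\mathbf{E}\|_{L^\infty},\ \|\nabla_x^2\mathbf{E}\|_{L^\infty}(\tau|\alpha|+\|\delta^v_\alpha Y_{\tau,t}\|)\}$; with the correct $\langle\tau\rangle^{-3-a}$ decay for both factors one gets $\int_0^\infty\min\{1,|\alpha|\langle\tau\rangle\}\langle\tau\rangle^{-1-a}\,d\tau\lesssim|\alpha|^a$. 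Your splitting at $\tau\sim|\alpha|^{-1}$ is the same manoeuvre in spirit, but it too only works once the sharper field estimates above are in hand.
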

Clearly, Proposition \ref{estimates about Y and Wxx} follows from \eqref{results of W small} and \eqref{highestresults of Y small}.\vspace{0.15cm}\\
	\begin{proof}[Proof of Proposition \ref{estimates about Y and W}]
Thanks to \cite[Lemma 4.2]{HNX1} we have for any $\tau\in [0,T]
$, 			\begin{equation}\label{E estimate by rho}
	\langle \tau\rangle^{3}	\|\mathbf{E}(\tau)\|_{L^\infty}+	\langle \tau\rangle^{3+a}	\|\nabla_x \mathbf{E}(\tau)\|_{L^\infty}	+\langle \tau\rangle^{3+a}\|\nabla_x^2 \mathbf{E}(\tau)\|_{L^\infty}\leq c_0\|\mathfrak{g}\|_{1+a,T}.
\end{equation} 
Using the same argument as \cite[Proof of Proposition 4.1]{HNX1} and \eqref{E estimate by rho}, we obtain 	\eqref{results of Y small} and \eqref{results of W small}. Now, we prove \eqref{highestresults of Y small}.  One has
\begin{align*}
	&	\sup_{\alpha}	\frac{	\|\delta_{\alpha}^v	\nabla_vY_{s,t}\|_{L^\infty_{x,v}}}{|\alpha|^{a}}\leq 	\sup_{\alpha}\frac{1}{|\alpha|^{a}} \left(\int_{s}^{t}(\tau-s)\|\nabla_x \mathbf{E}(\tau)\|_{L^{\infty}}\|\delta_{\alpha}^v\nabla_v Y_{\tau,t}\|_{L^\infty_{x,v}}d\tau\right.\\&
	\left.+\int_{s}^{t}(\tau-s)\min\left\{\|\nabla_x \mathbf{E}(\tau)\|_{L^\infty},\|\nabla_x^2 \mathbf{E}(\tau)\|_{L^\infty} \left(|\alpha\|\tau|+\|\delta_\alpha^v Y_{\tau,t}\|_{L^\infty_{x,v}}\right)\right\}\left(|\tau|+\|\nabla_v Y_{\tau,t}\|_{L^\infty_{x,v}}\right)d\tau\right).
\end{align*}
By \eqref{E estimate by rho} and \eqref{results of Y small}, one gets
\begin{align*}
	\sup_{\alpha}	\frac{	\|\delta_{\alpha}^v	\nabla_vY_{s,t}\|_{L^\infty_{x,v}}}{|\alpha|^{a}}&\leq 	c_0 \int_{s}^{t}\frac{\tau-s}{\langle \tau\rangle^{3}}d\tau \|\mathfrak{g}\|_{1+a,T} \sup_{s\leq \tau\leq t}	\sup_{\alpha}	\frac{	\|\delta_{\alpha}^v	\nabla_vY_{s,t}\|_{L^\infty_{x,v}}}{|\alpha|^{a}}\\&\quad
	+c_0\|\mathfrak{g}\|_{1+a,T}\sup_{\alpha}\frac{1}{|\alpha|^{a}} \int_{s}^{t}(\tau-s)\min\left\{\frac{1}{\langle\tau\rangle^{3+a}},\frac{|\alpha|}{\langle\tau\rangle^{2+a}}\right\}\langle\tau\rangle d\tau\\&\leq \frac{1}{10}\sup_{s\leq \tau\leq t}	\sup_{\alpha}	\frac{	\|\delta_{\alpha}^v	\nabla_vY_{s,t}\|_{L^\infty_{x,v}}}{|\alpha|^{a}}+c_0\|\mathfrak{g}\|_{1+a,T}\sup_{\alpha}\frac{1}{|\alpha|^{a}}\int_{0}^{\infty}\frac{\min\{1,|\alpha|\langle\tau\rangle\}}{\langle\tau\rangle^{1+a}}d\tau
	\\&\leq \frac{1}{10}\sup_{s\leq \tau\leq t}	\sup_{\alpha}	\frac{	\|\delta_{\alpha}^v	\nabla_vY_{s,t}\|_{L^\infty_{x,v}}}{|\alpha|^{a}}+c_0'\|\mathfrak{g}\|_{1+a,T},
\end{align*}
for $\|\mathfrak{g}\|_{1+a,T}\leq \varepsilon_0$ small enough.	This implies \eqref{highestresults of Y small}. \\
Moreover, similar to \cite[Proof of Proposition 4.7]{HNX1}, we can construct a $C^{1}$ map~
$
(x,v)\in \mathbb{R}_x^2\times\mathbb{R}_v^2\mapsto\Psi_{s,t}(x,v),
$
satisfying \eqref{X(x,Psi)} and \eqref{z3}.  The proof of Proposition \ref{estimates about Y and W} is complete.
\end{proof}
		\section{Contribution of the initial data and the reaction term}\label{Contribution of the initial data} 
	 As a consequence of  Proposition \ref{estimates about Y and W}, we have  
		\begin{lemma}\label{I a estimate} 	Let  $h=h(x,v):\mathbb{R}^2_x\times\mathbb{R}^2_v\to \mathbb{R}$, define
			\begin{equation}
				\mathcal{I}_h(\mathfrak{g})(t,x)= \int_{\mathbb{R}^2}h(X_{0,t}(x,v),V_{0,t}(x,v))dv,
			\end{equation}
			where $(X_{s,t},V_{s,t})$ are in  Proposition \ref{estimates about Y and W} with $\|\mathfrak{g}\|_{1+a,T}\leq \varepsilon_0$. Then there holds
			\begin{equation}\label{I norm}
				\|\mathcal{I}_h(\mathfrak{g})\|_{1+a,T}\lesssim_a |||h|||_{1+a}.
			\end{equation}
		\end{lemma}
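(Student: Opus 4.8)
The plan is to verify, for every $t\in[0,T]$ and with implicit constants depending only on $a$, the handful of pointwise-in-time inequalities that together amount to $\|\mathcal{I}_h(\mathfrak g)\|_{1+a,T}\lesssim_a|||h|||_{1+a}$: namely $\|\mathcal{I}_h(t)\|_{L^1}\lesssim|||h|||_{1+a}$, $\langle t\rangle^2\|\mathcal{I}_h(t)\|_{L^\infty}\lesssim|||h|||_{1+a}$, and $\langle t\rangle^{j+a}\|\nabla_x^j\mathcal{I}_h(t)\|_{\dot B^a_{1,\infty}}+\langle t\rangle^{j+a+2}\|\nabla_x^j\mathcal{I}_h(t)\|_{\dot B^a_{\infty,\infty}}\lesssim|||h|||_{1+a}$ for $j=0,1$. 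For each Besov seminorm I split the defining supremum over the increment $\alpha$ at $|\alpha|\sim\langle t\rangle$; when $|\alpha|\ge\langle t\rangle$ I bound the two evaluations in $\delta_\alpha(\cdot)$ separately and absorb $|\alpha|^{-a}\le\langle t\rangle^{-a}$ using the crude estimates obtained along the way, so the essential regime is $|\alpha|\le\langle t\rangle$.

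The $L^1$ bound is immediate: the vector field $(v,\mathbf E(t,x))$ is divergence-free in $(x,v)$, hence $\Phi_t:(x,v)\mapsto(X_{0,t}(x,v),V_{0,t}(x,v))$ preserves Lebesgue measure and $\|\mathcal{I}_h(t)\|_{L^1}\le\iint|h(\Phi_t(x,v))|\,dx\,dv=\|h\|_{L^1_{x,v}}\le|||h|||_{1+a}$. For the remaining, decaying, bounds I distinguish $t\le1$ and $t\ge1$. When $t\le1$ no decay is needed: by \eqref{results of W small} at $s=0$ one has $\partial_vV_{0,t}=I+O(\|\mathfrak g\|)$ (the term $t\nabla_xW_{0,t}$ being harmless for $t\le1$), so $v\mapsto V_{0,t}(x,v)$ is a global diffeomorphism of $\mathbb R^2$ with Jacobian $\approx1$; changing variables $V=V_{0,t}(x,v)$ and bounding $h$, resp.\ its $\delta_\alpha^x$-difference, pointwise by $\|h(\cdot,V)\|_{L^\infty_x}$, resp.\ by $|\alpha|^a\,\mathcal D^ah$ (the $X$-slot being shifted by $\alpha+O(\|\mathfrak g\||\alpha|)$ and the $V$-slot by $O(\|\mathfrak g\||\alpha|)$), yields all the $t\le1$ estimates in terms of the $L^1_vL^\infty_x$, resp.\ (using $\Phi_t$-measure preservation) the $L^1_{x,v}$, pieces of $|||h|||_{1+a}$; the $\nabla_x\mathcal{I}_h$ bounds follow by differentiating under the integral and invoking the $i=1$ pieces $\mathcal D^a(\nabla_{x,v}h)$.

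The decay for $t\ge1$ rests on the observation that if one changes variables $v\mapsto y:=x-tv$ — a \emph{linear} substitution, always admissible, with $dv=t^{-2}dy$ — and uses \eqref{z4}, then
\begin{equation*}
\mathcal{I}_h(t,x)=t^{-2}\!\int_{\mathbb R^2}h\Big(y+Y_{0,t}\big(y,\tfrac{x-y}{t}\big),\ \tfrac{x-y}{t}+W_{0,t}\big(y,\tfrac{x-y}{t}\big)\Big)\,dy,
\end{equation*}
so that the \emph{entire} $x$-dependence has entered through the velocity slot $\tfrac{x-y}{t}$; consequently the $x$-derivative of either argument of $h$ is $O(t^{-1})$ — precisely $t^{-1}\nabla_vY_{0,t}=O(t^{-1}\|\mathfrak g\|)$ and $t^{-1}(I+\nabla_vW_{0,t})=t^{-1}(I+O(\|\mathfrak g\|))$ by \eqref{results of Y small}, \eqref{results of W small} at $s=0$ — while the $y$-derivative of the first argument is $I+O(\|\mathfrak g\|)$. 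Writing $A(x,y),B(x,y)$ for the two arguments, the $L^\infty$ bound follows from $|\mathcal{I}_h(t,x)|\le t^{-2}\!\int_y\|h(A(x,y),\cdot)\|_{L^\infty_v}\,dy$ and the change of variables $\widetilde y=A(x,y)$, giving $\lesssim t^{-2}\|h\|_{L^1_xL^\infty_v}$. For the Besov seminorms with $|\alpha|\le t$ one uses $|A(x,y)-A(x-\alpha,y)|+|B(x,y)-B(x-\alpha,y)|\lesssim t^{-1}|\alpha|$, hence the difference of the two $h$-evaluations is $\lesssim(t^{-1}|\alpha|)^a\,\mathcal D^ah$ evaluated at $O(t^{-1}|\alpha|)$-neighbours of $(A(x,y),B(x,y))$; integrating in $y$ (for $\dot B^a_{\infty,\infty}$, after $\widetilde y=A(x,y)$) or in $(x,y)$ (for $\dot B^a_{1,\infty}$, after the successive substitutions $V=B(x,y)$ and $\widetilde y=y+\tilde Y(V,y)$, whose Jacobians are $t^2$ and $1+O(\|\mathfrak g\|)$) one gets $\|\delta_\alpha^x\mathcal{I}_h(t)\|_{L^\infty}\lesssim t^{-2}(t^{-1}|\alpha|)^a\|\mathcal D^ah\|_{L^1_xL^\infty_v}$ and $\|\delta_\alpha^x\mathcal{I}_h(t)\|_{L^1}\lesssim(t^{-1}|\alpha|)^a\|\mathcal D^ah\|_{L^1_{x,v}}$, which after dividing by $|\alpha|^a$ are exactly the claimed time-weights. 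For $\nabla_x\mathcal{I}_h$ one differentiates the displayed formula: the dominant term is $t^{-3}\int_y\nabla_vh(A,B)\,dy$ (the $x$-derivative hitting the explicit $\tfrac{x-y}{t}$ in $B$, producing the extra $t^{-1}$), and the same scheme with $\nabla_vh$ and $\mathcal D^a(\nabla_{x,v}h)$ in place of $h$ and $\mathcal D^ah$ closes the last two estimates.

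The main obstacle is organizational rather than conceptual: one must carry out, with uniform control of the determinants, the auxiliary changes of variables $\widetilde y=A(x,y)$, $V=B(x,y)$ and $\widetilde y=y+\tilde Y(V,y)$ that convert the $(x,y)$- (or $(x,v)$-) double integrals of $\mathcal D^ah$ into the $L^1_{x,v}$- and $L^1_xL^\infty_v$-type components of $|||h|||_{1+a}$, checking each time that the induced Jacobian differs from the obvious power of $t$ only by $1+O(\|\mathfrak g\|)$ — which is precisely what the pointwise bounds on $Y_{0,t},W_{0,t}$ and their first derivatives in Proposition \ref{estimates about Y and W} provide. A minor but genuine point is the endpoint $p=\infty$, where one cannot integrate in $x$ and must bound the $y$-integral uniformly in $x$ by $\|h\|_{L^1_xL^\infty_v}$ (resp.\ $\|\mathcal D^ah\|_{L^1_xL^\infty_v}$) only after straightening the $A$-slot.
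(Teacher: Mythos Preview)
Your strategy --- the linear substitution $y=x-tv$ that funnels all $x$-dependence through $\tfrac{x-y}{t}$, the split of the Besov supremum at $|\alpha|\sim\langle t\rangle$, the straightening $\widetilde y=A(x,y)$ for $p=\infty$ and measure preservation of $\Phi_t$ for $p=1$ --- is exactly the one the paper intends (it defers to \cite[Proposition~5.1]{HNX1}), and the bookkeeping you sketch is correct.

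One step is underspecified. For the $\dot B^a_{p,\infty}$ bound on $\nabla_x\mathcal I_h$ you isolate the ``dominant term'' $t^{-3}\!\int_y\nabla_vh(A,B)\,dy$ and declare that ``the same scheme'' closes the remainder. But the full $x$-gradient is
\[
\nabla_x\mathcal I_h(t,x)=t^{-2}\!\int_y\Big(\nabla_xh(A,B)\,\partial_xA+\nabla_vh(A,B)\,\partial_xB\Big)\,dy,
\]
with $\partial_xA=t^{-1}(\nabla_vY_{0,t})(y,\tfrac{x-y}{t})$ and $\partial_xB=t^{-1}\bigl(I+(\nabla_vW_{0,t})(y,\tfrac{x-y}{t})\bigr)$. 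When $\delta_\alpha$ lands on these matrix factors rather than on $\nabla h$, the only way to produce the required $|\alpha|^a$ is through the $C^a$-in-$v$ control of $\nabla_vY_{0,t}$ and $\nabla_vW_{0,t}$, i.e.\ \eqref{highestresults of Y small} and the last piece of \eqref{results of W small}, \emph{not} merely ``the pointwise bounds on $Y_{0,t},W_{0,t}$ and their first derivatives'' that you invoke. Since those H\"older bounds are precisely what Proposition~\ref{estimates about Y and W} supplies (and \eqref{highestresults of Y small} is singled out in the paper as the critical estimate), the fix is immediate; but your write-up should make explicit that this is where the full strength of the proposition is actually used.
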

	The proof of Lemma \ref{I a estimate} is very similar to \cite[Proof of Proposition 5.1]{HNX1}, we omit it. \vspace{0.3cm}\\
	We next turn to the reaction term.  For any given $F:[0,T]\times \mathbb{R}^2\to \mathbb{R}$ and $\eta:  \mathbb{R}^2\to \mathbb{R}$, we denote 
		\begin{align*}
			\mathcal{T}[F,\eta](t,x)=-	\mathcal{T}_{NL}[F,\eta](t,x)+	\mathcal{T}_{L}[F,\eta](t,x),
		\end{align*}
		with 
		\begin{align*}
			&	\mathcal{T}_{L}[F,\eta](t,x)=\int_{0}^{t}\int_{\mathbb{R}^2}F(s,x-(t-s)v)\eta(v)dvds,\\&
			\mathcal{T}_{NL}[F,\eta](t,x)=\int_{0}^{t}\int_{\mathbb{R}^2}F(s,X_{s,t}(x,v))\eta(V_{s,t}(x,v))dvds,
		\end{align*}
		where $(X_{s,t},V_{s,t})$ are in  Proposition \ref{estimates about Y and W} with $\|\mathfrak{g}\|_{1+a,T}\leq \varepsilon_0$.
		By changing variable, we reformulate it as follows: 
		\begin{align*}
			&	\mathcal{T}_{L}[F,\eta](t,x)=	\int_{0}^{t}\int_{\mathbb{R}^2}F(s,w+\frac{s}{t}(x-w))\eta(\frac{x-w}{t})\frac{dwds}{t^2},\\&	\mathcal{T}_{NL}[F,\eta](t,x)=\int_{0}^{t}\int_{\mathbb{R}^2}
			F\left(s,Y_{s,t}(w,\frac{x-w}{t})+w+\frac{s(x-w)}{t}\right)\eta\left(W_{s,t}(w,\frac{x-w}{t})+\frac{x-w}{t}\right)
			\frac{dwds}{t^2}.
		\end{align*}
		We have the following Lemma.
		\begin{lemma} \label{esT} Let $a\in (0,1)$. Let $\eta$ be such that \begin{equation}\label{z9}
				\sum_{j=0}^3\langle v\rangle^3|\nabla^j\eta(v)|\leq 1.
			\end{equation}
 Then, 
			\\ \textbf{(1)}
			\begin{align}\label{z10'}
				\left|	\mathcal{T}[F,\eta](t,x)\right|\lesssim_{a} \|\mathfrak{g}\|_{1+a,T} \int_{0}^{t}\int_{\mathbb{R}^2}|F(s,x-(t-s)v)|\frac{1}{\langle s\rangle^{1+a}}\frac{dvds}{\langle v\rangle^3}.
			\end{align}
			In particular, for $p=1,\infty$,
			\begin{align}\label{z21}
				\langle t\rangle^\frac{2(p-1)}{p}\|	\mathcal{T}[F,\eta](t)\|_{L^p}\lesssim_{a} \|\mathfrak{g}\|_{1+a,T}\left(\sup_{s\in [0,t]}\langle s\rangle^\frac{2(p-1)}{p}\|F(s)\|_{L^p}+ \sup_{s\in [0,t]}\|F(s)\|_{L^1}\right).
			\end{align}
			\\ \textbf{(2)}
			For any	$0\leq t\leq T$, 
			\begin{align}
				&\sum_{p=1,\infty}\langle t\rangle^{\frac{2(p-1)}{p}+a}\|\mathcal{T}[F,\eta](t)\|_{\dot{B}^a_{p,\infty}}\lesssim_{a} \|\mathfrak{g}\|_{1+a,T} ^a\sum_{p=1,\infty}\sup_{s\in [0,t]} \langle s\rangle^{1+\frac{2(p-1)}{p}}  \left(\|F(s)\|_{L^p}+	\|F(s)\|_{\dot{F}^a_{p,\infty}}\right). \label{z12}
			\end{align}
		 \textbf{(3)}
	For any	$0\leq t\leq T$, 
	\begin{align}\nonumber
		&\sum_{p=1,\infty}\langle t\rangle^{\frac{2(p-1)}{p}+1+a}\|\nabla\mathcal{T}[F,\eta](t)\|_{\dot{B}^a_{p,\infty}}\\&\quad\quad\lesssim_{a}||\mathfrak{g}||_{1+a,T}^a\sum_{j=0,1}\sum_{p=1,\infty}\sup_{s\in [0,t]}\langle s\rangle^{1+\frac{2(p-1)}{p}}\left(\langle s\rangle^{\frac{a}{2}}\|\nabla^jF(s)\|_{\dot{F}^a_{p,\infty}}+\| F(s)\|_{L^p}\right), \label{z12b}
	\end{align}
where  $$\|g\|_{\dot{F}_{p,\infty}^{s}}:=\big\|\sup_{\alpha}\frac{|\delta_\alpha g(x)|}{|\alpha|^s}\big\|_{L^{p}_x}.$$
\end{lemma}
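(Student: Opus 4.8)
The plan is to reduce part (3) to part (2) by a commutation argument: one wants to show $\nabla_x$ essentially falls on $F$ up to acceptable error, and then apply the already-established $\dot B^a_{p,\infty}$ bound. Concretely, I would differentiate the reformulated integrals
\[
	\mathcal{T}_{L}[F,\eta](t,x)=\int_{0}^{t}\!\!\int_{\mathbb{R}^2}F\!\left(s,w+\tfrac{s}{t}(x-w)\right)\eta\!\left(\tfrac{x-w}{t}\right)\frac{dwds}{t^2},
\]
and its nonlinear analogue, in $x$. For the linear piece, after changing variables back to $v=\frac{x-w}{t}$ one has $\nabla_x\mathcal{T}_L[F,\eta]=\mathcal{T}_L[\nabla F,\eta]+\frac1t\,\mathcal{T}_L[F,\nabla\eta]$ — i.e. the $x$-derivative either hits $F$ (producing a $\nabla F$ with the same structure, but now the loss factor $\frac{s}{t}$ is available and must be exploited to recover the extra $\langle t\rangle^{-1}$) or hits $\eta$ (producing $\frac1t$ and $\nabla\eta$, which still satisfies \eqref{z9} up to a constant since $\sum_{j\le 3}\langle v\rangle^3|\nabla^j\eta|\le 1$ controls $\nabla\eta$ in the same class). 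For the nonlinear piece, $\nabla_x$ also hits $Y_{s,t}$ and $W_{s,t}$ through the chain rule, generating factors $\nabla_x Y_{s,t}$ and $\nabla_x W_{s,t}$ which by \eqref{results of Y small}–\eqref{results of W small} are bounded by $\|\mathfrak g\|_{1+a,T}\langle s\rangle^{-1-a}$ and $\langle s\rangle^{-2-a}$ respectively, hence are strictly better than the $\frac{s}{t}$ factor and cause no trouble.

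Next I would take the Hölder/Besov seminorm in $x$ of these expressions, using the operator $\sup_\alpha|\delta_\alpha(\cdot)|/|\alpha|^a$. The key point is that the $\dot F^a_{p,\infty}$ (strong-type) norm, not merely $\dot B^a_{p,\infty}$, must be used for the $F$-input here: this is exactly why the right-hand side of \eqref{z12b} contains $\|\nabla^j F(s)\|_{\dot F^a_{p,\infty}}$ with the extra weight $\langle s\rangle^{a/2}$. The reason is that after differentiating, one has a pointwise bound of the form
\[
	\left|\nabla\mathcal{T}[F,\eta](t,x)\right|\lesssim \|\mathfrak g\|_{1+a,T}\int_0^t\!\!\int_{\mathbb R^2}\big(|\nabla F|+\tfrac1{\langle s\rangle}|F|\big)(s,x-(t-s)v)\,\frac{1}{\langle s\rangle^{1+a}}\frac{dv\,ds}{\langle v\rangle^3}
\]
analogous to \eqref{z10'}, and then to extract the further $a$-Hölder regularity in $x$ one splits the increment $\delta_\alpha$ into the piece where the bump shift $|\alpha|$ is small relative to $\langle s\rangle$ (use the $x$-regularity of $F$, i.e. $\dot F^a$) versus large (use the decay of $F$ itself together with the smoothing from the $v$-integration / velocity averaging). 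Balancing these two regimes at the threshold $|\alpha|\sim \langle s\rangle^{-1}$ is precisely what produces the exponent $\|\mathfrak g\|_{1+a,T}^a$ and the $\langle s\rangle^{a/2}$ weight — the latter being the price paid for the low-regularity interpolation, and it is harmless because $\|F\|_{1+a,T}$ already carries a $\langle s\rangle^{1+2(p-1)/p}$ decay.

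The main obstacle I anticipate is the careful bookkeeping of the two time-scales — the "echo" loss factor $\frac{s}{t}$ coming from $\nabla_x$ acting through the characteristic $w+\frac{s}{t}(x-w)$, versus the decay $\langle s\rangle^{-1-a}$ in the kernel and the extra decay of $\nabla_x\mathbf E$, $\nabla_x Y$, $\nabla_x W$ — so that the final $s$-integral $\int_0^t (\text{loss})\cdot\langle s\rangle^{-1-a}\,ds$ converges and produces exactly $\langle t\rangle^{-1-a-2(p-1)/p}$ after multiplying by the weight. A secondary technical point is checking that when $\nabla_x$ hits $\eta$ one really does land back in the admissible class \eqref{z9}: since $\nabla\eta$ need only satisfy $\sum_{j\le 2}\langle v\rangle^3|\nabla^j(\nabla\eta)|\lesssim 1$, and \eqref{z9} gives $j$ up to $3$, this is fine, and one simply reapplies part (2) with $\nabla\eta$ in place of $\eta$. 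Modulo these, part (3) follows from part (2) and Proposition \ref{estimates about Y and W} by differentiating under the integral sign and resumming.
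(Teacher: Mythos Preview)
Your commutation strategy is the right one and matches the paper's Step~3, but the argument contains a genuine error at exactly the critical point.

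You differentiate $\mathcal{T}_{NL}$ in the reformulated variables $(w,\tfrac{x-w}{t})$ (this is necessary to extract the $\tfrac{1}{t}$ gain for large $t$), and then assert that when $\nabla_x$ hits the characteristics through the chain rule you get factors $\nabla_x Y_{s,t}$ and $\nabla_x W_{s,t}$ with decay $\langle s\rangle^{-1-a}$, $\langle s\rangle^{-2-a}$. This is wrong: in that representation the characteristics appear as $Y_{s,t}\big(w,\tfrac{x-w}{t}\big)$ and $W_{s,t}\big(w,\tfrac{x-w}{t}\big)$, so $\partial_{x_i}$ acts on the \emph{second} argument and produces $\tfrac{1}{t}\,\partial_{v_i}Y_{s,t}$ and $\tfrac{1}{t}\,\partial_{v_i}W_{s,t}$. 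By Proposition~\ref{estimates about Y and W} these only satisfy $\|\nabla_v Y_{s,t}\|_{L^\infty}\lesssim \langle s\rangle^{-a}$ and $\|\nabla_v W_{s,t}\|_{L^\infty}\lesssim \langle s\rangle^{-1-a}$, one full power of $\langle s\rangle$ worse than what you claimed. Consequently the remainder term $\mathcal{T}^{2,i}$ carrying $(\nabla F)\cdot\tfrac{1}{t}\partial_{v_i}Y_{s,t}$ has essentially \emph{no} extra $\langle s\rangle$-decay in its integrand beyond the overall $t^{-1}$; it does not ``cause no trouble'' --- it is the heart of the 2D criticality.

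This is where the $\langle s\rangle^{a/2}$ weight actually comes from, not from any balancing at $|\alpha|\sim\langle s\rangle^{-1}$ as you suggest. To take $\delta_\alpha$ of $\mathcal{T}^{2,i}$ one must in particular let the increment fall on $\partial_{v_i}Y_{s,t}\big(w,\tfrac{x-w}{t}\big)$, and for this the paper needs the critical estimate \eqref{highestresults of Y small}, namely $\sup_{\alpha}|\alpha|^{-a}\|\delta^v_\alpha\nabla_v Y_{s,t}\|_{L^\infty}\lesssim\|\mathfrak{g}\|_{1+a,T}$ \emph{uniformly in $s$} (this is singled out in Proposition~\ref{estimates about Y and Wxx} as the key new ingredient). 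After these bounds one is left with $\int_0^t\|\nabla F(s)\|_{\dot F^a_{p,\infty}}\,ds$ and $\int_0^t\|\nabla F(s)\|_{L^p}\,ds$ without any $\langle s\rangle^{-1-a}$ kernel, and one closes only by inserting $\langle s\rangle^{-1-a/2}$ artificially and paying the $\langle s\rangle^{1+a/2}$ weight on the right-hand side of \eqref{z12b}. Your sketch misses this mechanism entirely; once you correct the derivative (replacing $\nabla_x Y$ by $\nabla_v Y$) and invoke \eqref{highestresults of Y small}, the rest of your outline goes through.
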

		\begin{proof} \textbf{ Step 1)}
			First of all, we change of variable and obtain
			\begin{align*}
				|\mathcal{T}[F,\eta](t,x)|=&\left|-\int_{0}^{t}\int_{\mathbb{R}^2}F(s,x-(t-s)v)\eta(V_{s,t}(x,\Psi_{s,t}(t,x)))|\det(\nabla_v\Psi_{s,t}(x,v))|dvds\right.\\
				&\quad\left.+\int_{0}^{t}\int_{\mathbb{R}^2}F(s,x-(t-s)v)\eta(v)dvds\right|\\
				\leq& \int_{0}^{t}\int_{\mathbb{R}^2}|F(s,x-(t-s)v)|\left|\eta(V_{s,t}(x,\Psi_{s,t}(t,x)))-\eta(v)\right|dvds\\
				&+\int_{0}^{t}\int_{\mathbb{R}^2}|F(s,x-(t-s)v)|\Big||\det(\nabla_v\Psi_{s,t}(x,v))|-1\Big|\left|\eta(V_{s,t}(x,\Psi_{s,t}(t,x)))\right|dvds.
			\end{align*}
			Hence, since we have 
			\begin{align*}
			&	\left|\eta(V_{s,t}(x,\Psi_{s,t}(x,v)))-\eta(v)\right|\leq\int_{0}^{1}\left|\nabla\eta(\varpi V_{s,t}(x,v)+(1-\varpi)v)\right|V_{s,t}(x,v)-v|d\varpi\\
				&\lesssim\sup_{\varpi\in[0,1]}\frac{|V_{s,t}(x,v)-v|}{\langle \varpi W_{s,t}(x-vt,v)+v\rangle^3}
				\sim\frac{|W_{s,t}(x-vt,v)|}{\langle v\rangle^3}\overset{\eqref{results of W small}}{\lesssim_{a}} \frac{\|\mathfrak{g}\|_{1+a,T}}{\langle s\rangle^{{2}}{\langle v\rangle^3}},
			\end{align*}
			and
			$\left|\eta(V_{s,t}(x,\Psi_{s,t}(t,x)))\right|\lesssim{\langle W_{s,t}(x-t\Psi_{s,t}(t,x),\Psi_{s,t}(t,x))+v\rangle^{-3}}\lesssim{\langle v\rangle^{-3}}$, 
			where we use the fact that 
			$ 	\sup_{0\leq s\leq t}\|W_{s,t}\|_{L^\infty_{x,v}}\leq 1 $ in \eqref{results of W small}. Hence
			\begin{align*}
				\left|	\mathcal{T}[F,\eta](t,x)\right|
				\overset{\eqref{z3}}{\lesssim_{a}}  \|\mathfrak{g}\|_{1+a,T}\int_{0}^{t}\int_{\mathbb{R}^2}|F(s,x-(t-s)v)|\frac{1}{\langle s\rangle^{1+a}}\frac{dvds}{\langle v\rangle^3}.
			\end{align*}
			This gives \eqref{z10'}.  Moreover,	\begin{align*}
				\|	\mathcal{T}[F,\eta](t)\|_{L^p}\lesssim_{a} \|\mathfrak{g}\|_{1+a,T} \int_{0}^{t}\left\|\int_{\mathbb{R}^2}|F(s,x-(t-s)v)|\frac{dv}{\langle v\rangle^3}\right\|_{L^p_x}\frac{ds}{\langle s\rangle^{1+a}}.
			\end{align*} 
			Then, thanks to Lemma \ref{mathcal H1H2}, one gets
			\begin{align}\nonumber
				\|	\mathcal{T}[F,\eta](t)\|_{L^p}\lesssim_{a}& t^{-\frac{2(p-1)}{p}} \|\mathfrak{g}\|_{1+a,T} \int_{0}^{t/2}\|F(s)\|_{L^1}\frac{ds}{\langle s\rangle^{1+a}}+\|\mathfrak{g}\|_{1+a,T}\int_{t/2}^{t}\|F(s)\|_{L^p}\frac{ds}{\langle s\rangle^{1+a}}\\
				\lesssim_{a}& \frac{1}{t^\frac{2(p-1)}{p}}\|\mathfrak{g}\|_{1+a,T} \left(\sup_{s\in [0,t]}\|F(s)\|_{L^1}+ \sup_{s\in [0,t]}\langle s\rangle^\frac{2(p-1)}{p}\|F(s)\|_{L^p}\right).\label{zz}
			\end{align}
			Combining this with 
			\begin{align*}
				\|	\mathcal{T}[F,\eta](t)\|_{L^p}\lesssim_{a} \|\mathfrak{g}\|_{1+a,T} \int_{0}^{t}\|F(s)\|_{L^p}\frac{1}{\langle s\rangle^{1+a}}ds\lesssim_{a} \|\mathfrak{g}\|_{1+a,T}\sup_{s\in [0,t]}\|F(s)\|_{L^p},
			\end{align*} to obtain \eqref{z21}.\vspace{0.3cm}\\
				Note that  we only use $\sup_v\langle v\rangle^3(|\eta(v)|+|\nabla \eta(v)|)\leq 1$ in the proof of \eqref{z10'}. \vspace{0.3cm}\\	
			\textbf{ Step 2)} We estimate $\|\mathcal{T}[F,\eta](t)\|_{\dot{B}^a_{p,\infty}}.$ \vspace{0.15cm}\\
			\textbf{2.1)} Case $0\leq t\leq \min\{1,T\}$. We divide $\delta_{\alpha}\mathcal{T}[F,\eta](t,x)$ into three terms:
			\begin{align*}
				\delta_{\alpha}\mathcal{T}[F,\eta](t,x)=\overline{\mathcal{T}}_{\alpha}^1[F,\eta](t,x)|+\overline{\mathcal{T}}_{\alpha}^2[F,\eta](t,x)+	\overline{\mathcal{T}}_{\alpha}^3[F,\eta](t,x)
			\end{align*}
			where
			\begin{align*}
				&	\overline{\mathcal{T}}_{\alpha}^1[F,\eta](t,x)=\int_{0}^{t}\int_{\mathbb{R}^2}
		\left(	-\delta_{\alpha}F(s,.)\left(X_{s,t}(x,v)\right)\eta\left(V_{s,t}(x,v)\right)	+\delta_{\alpha}F(s,.)(x-(t-s)v)\eta(v)\right)dvds,
				\\	&\overline{\mathcal{T}_{\alpha}}^2[F,\eta](t,x)=\int_{0}^{t}\int_{\mathbb{R}^2}
				\left[F\left(s,X_{s,t}(x-\alpha,v)\right)-F\left(s,,X_{s,t}(x,v)-\alpha\right) \right]\eta\left(V_{s,t}(x,v)\right)dvds,
				\\
				&	\overline{\mathcal{T}}_{\alpha}^3[F,\eta](t,x)=\int_{0}^{t}\int_{\mathbb{R}^2}
				F\left(s,X_{s,t}(x-\alpha,v)\right) \left[\eta\left(V_{s,t}(x-\alpha,v)\right)-\eta\left(V_{s,t}(x,v)\right) \right]
				dvds.
			\end{align*}
			For the first term, using \eqref{z9} and \eqref{z10'} we have
			\begin{align*}
				\|\overline{\mathcal{T}}_{\alpha}^1[F,\eta](t)\|_{L^p}&=\|\mathcal{T}[\delta_{\alpha}F,\eta](t)\|_{L^p}\lesssim_{a} \|\mathfrak{g}\|_{1+a,T}\left(\sup_{s\in [0,t]}\|\delta_{\alpha}F(s)\|_{L^1}+\sup_{s\in [0,t]}\|\delta_{\alpha}F(s)\|_{L^p} \right)\\
				&	\lesssim_{a}|\alpha|^a \|\mathfrak{g}\|_{1+a,T} \left(\sup_{s\in [0,t]}\|F(s)\|_{\dot{B}^a_{1,\infty}}+
				\sup_{s\in [0,t]}\|F(s)\|_{\dot{B}^a_{p,\infty}}\right).
			\end{align*}
			Since 
			\begin{align*}
				\left|X_{s,t}(x-\alpha,v)-\left(X_{s,t}(x,v)-\alpha\right)\right|=\left|Y_{s,t}(x-\alpha-tv,v)-Y_{s,t}(x-tv,v)\right|\leq|\alpha|\|\nabla_x Y_{s,t}\|_{L^\infty},
			\end{align*} 
			we can estimate  $	\overline{\mathcal{T}}_{\alpha}^2[F,\eta](t,x)$ as follows
			\begin{align*}
				\left\|\overline{\mathcal{T}}_{\alpha}^2[F,\eta](t)\right\|_{L^p}
				\lesssim&|\alpha|^a\int_{0}^{t}\int_{\mathbb{R}^2}\|\sup_z	|z|^{-a}|\delta_{z}F(s,.)\|_{L^p}\|\nabla_x Y_{{s,t}}\|_{L^\infty_{x,v}}^a\frac{dv}{\langle v \rangle^3}ds
				\\
				\lesssim_{a}&|\alpha|^a\|\mathfrak{g}\|_{1+a,T}^a\left(\sup_{s\in [0,t]}\langle s\rangle\|F(s)\|_{\dot{F}^a_{p,\infty}}\right)
				\int_{0}^{t}\langle s\rangle^{-1-(1+a)a}ds
			\\\lesssim_{a}&|\alpha|^a\|\mathfrak{g}\|_{1+a,T}^a\sup_{s\in [0,t]}\langle s\rangle\|F(s)\|_{\dot{F}^a_{p,\infty}}.
			\end{align*}
			Note that by \eqref{results of W small}, one has
			$
			\sup_{s,t}\|W_{s,t}\|_{L^\infty_{x,v}}\leq 1.
			$
			Then,
			\begin{align}\label{V sim v}
				\langle \varpi V_{s,t}(x,v)+(1-\varpi)V_{s,t}(x-\alpha,v)\rangle= \langle v+\varpi W_{s,t}(x-tv,v)+(1-\varpi)W_{s,t}(x-\alpha-tv,v)\rangle
				\sim\langle v\rangle.
			\end{align}	
			We can estimate $\overline{\mathcal{T}}_{\alpha}^3[F,\eta](t)$,
			\begin{align*}				
				\|\overline{\mathcal{T}}_{\alpha}^3[F,\eta](t)\|_{L^p}&\leq \int_{0}^{t}\int_{\mathbb{R}^2}\int_0^1\|F(s)\|_{L^p}\left|\nabla\eta(\varpi V_{s,t}(x,v)+(1-\varpi)V_{s,t}(x-\alpha,v))\right| \left|\delta_\alpha^xV_{s,t}(x,v)\right| d\varpi dvds\\
				&\lesssim |\alpha|^a\int_{0}^{t}\int_{\mathbb{R}^2}\|F(s)\|_{L^p}\frac{1}{\langle v\rangle^3}\sup_\alpha\frac{\left|\delta_\alpha^xV_{s,t}(x,v)\right|}{|\alpha|^a} dvds\\
				&\overset{\eqref{results of W small}}{\lesssim_{a}}|\alpha|^a\|\mathfrak{g}\|_{1+a,T}\left(\sup_{s\in [0,t]}\|F(s)\|_{L^p}\right)\int_0^t \frac{ds}{\langle s\rangle^{2+a}}
				\lesssim_{a}|\alpha|^a\|\mathfrak{g}\|_{1+a,T} \sup_{s\in [0,t]} \|F(s)\|_{L^p}.
			\end{align*}
			In conclusion,
			for $0\leq t\leq \min\{1,T\}$,
			\begin{align}\label{d3}
				\|\mathcal{T}[F,\eta](t)\|_{\dot{B}^a_{p,\infty}} 
				\lesssim_{a} \|\mathfrak{g}\|_{1+a,T} ^a \left(\sup_{s\in [0,t]} \|F(s)\|_{\dot{F}^a_{p,\infty}}
				+\sup_{s\in [0,t]}\|F(s)\|_{L^p}\right).
	\end{align}
	\textbf{2.2)} Case $T> 1$ and $1\leq t\leq T$. 
	Set 
	\begin{align}
	\nonumber
			&Z_1(x)=Y_{s,t}(w,\frac{x-w}{t})+w+\frac{s(x-w)}{t},~	Z_2(x)=W_{s,t}(w,\frac{x-w}{t})+\frac{x-w}{t},\\
			&Z_3(x)=W_{s,t}(w,\frac{x-\alpha-w}{t})+\frac{x-w}{t},~~
			Z_4(x)=Y_{s,t}(w,\frac{x-w}{t})+w+\frac{s(x-\alpha-w)}{t}.\label{Z1234}
	\end{align}
	We have 
	\begin{align*}
		\delta_{\alpha}\mathcal{T}[F,\eta](t,x)=\mathcal{T}_{\alpha}^1[F,\eta](t,x)|+\mathcal{T}_{\alpha}^2[F,\eta](t,x)+	\mathcal{T}_{\alpha}^3[F,\eta](t,x)+	\mathcal{T}_{\alpha}^4[F,\eta](t,x),
	\end{align*}
	where 
	\begin{align*}
		&	\mathcal{T}_{\alpha}^1[F,\eta](t,x)=-\int_{0}^{t}\int_{\mathbb{R}^2}
				\delta_{\frac{s\alpha}{t}}F(s,.)\left(Z_1(x)\right)\eta\left(Z_2(x)\right)
				\frac{dwds}{t^{2}} \\&\quad\quad+	\int_{0}^{t}\int_{\mathbb{R}^2}	\delta_{\frac{s\alpha}{t}}F(s,.)(w+\frac{s}{t}(x-w))\eta(\frac{x-w}{t})\frac{dwds}{t^2},\\
				&	\mathcal{T}_{\alpha}^2[F,\eta](t,x)=\int_{0}^{t}\int_{\mathbb{R}^2}
				F\left(s,Z_1(x-\alpha)\right)(\delta_{\frac{\alpha}{t}}\eta)\left(Z_2(x)\right)
				\frac{dwds}{t^{2}}
			\\&\quad\quad	-	\int_{0}^{t}\int_{\mathbb{R}^2}F(s,w+\frac{s}{t}(x-w-\alpha))(\delta_{\frac{\alpha}{t}}\eta)(\frac{x-w}{t})\frac{dwds}{t^2},\\
				&	\mathcal{T}_{\alpha}^3[F,\eta](t,x)=\int_{0}^{t}\int_{\mathbb{R}^2}
				\left(F\left(s,Z_1(x-\alpha)\right)-F\left(s,Z_4(x)\right)\right) \eta\left(Z_2(x)\right)
				\frac{dwds}{t^{2}},\\
				&	\mathcal{T}_{\alpha}^4[F,\eta](t,x)=\int_{0}^{t}\int_{\mathbb{R}^2}
				F\left(s,Z_1(x-\alpha)\right) \left(\eta\left(Z_3(x)\right)-\eta\left(Z_2(x)\right)\right)
				\frac{dwds}{t^{2}}.
			\end{align*}
			Going back to the variable $v=\frac{x-w}{t}$,
			and denoting 
			\begin{align*}
				Z_5(x)=Y_{s,t}(x-tv,v-\frac{\alpha}{t})+x-(t-s)v-\frac{s\alpha}{t},~~
				Z_6(x)=W_{s,t}(x-tv,v-\frac{\alpha}{t})+v,
			\end{align*}
			we reformulate it as follows:
			\begin{align*}
				&\mathcal{T}_{\alpha}^1[F,\eta](t,x)=\int_{0}^{t}\int_{\mathbb{R}^2}
				\left(-	\delta_{\frac{s\alpha}{t}}F(s,.)\left(X_{s,t}(x,v)\right)\eta\left(V_{s,t}(x,v)\right)+	\delta_{\frac{s\alpha}{t}}F(s,.)(x-(t-s)v)\eta(v)\right)dvds,\\&
				\mathcal{T}_{\alpha}^2[F,\eta](t,x)=\int_{0}^{t}\int_{\mathbb{R}^2}
				\left(	F\left(s,X_{s,t}(x,v)\right)\left(\delta_{\frac{\alpha}{t}}\eta\right)\left(V_{s,t}(x,v)\right)-F(s,x-(t-s)v)\left(\delta_{\frac{\alpha}{t}}\eta\right)(v)\right)dvds,
				\\
				&	\mathcal{T}_{\alpha}^3[F,\eta](t,x)= \int_{0}^{t}\int_{\mathbb{R}^2}
				\left(F\left(s,X_{s,t}(x,v)-\frac{s\alpha}{t}\right)-F\left(s,Z_5(x)\right)\right)
				\eta\left(V_{s,t}(x,v)\right)dvds,
				\\&
				\mathcal{T}_{\alpha}^4[F,\eta](t,x)= \int_{0}^{t}\int_{\mathbb{R}^2}
				F\left(s,Z_5(x)\right)
				\left(\eta\left(V_{s,t}(x,v)\right)-\eta\left(Z_6(x)\right)\right)
				dvds.
			\end{align*}
		Note that for $p=1,\infty$ and $|\alpha|\geq t$,
		\begin{align*}
		\langle t\rangle^{\frac{2(p-1)}{p}+a}\frac{\|\delta_{\alpha}\mathcal{T}[F,\eta](t)\|_{L^p}}{|\alpha|^a}&\leq 2	\langle t\rangle^{\frac{2(p-1)}{p}}\|\delta_{\alpha}\mathcal{T}[F,\eta](t)\|_{L^p}\leq 4	\langle t\rangle^{\frac{2(p-1)}{p}}\|\mathcal{T}[F,\eta](t)\|_{L^p}\\& \overset{\eqref{z21}}\lesssim_{a} \|\mathfrak{g}\|_{1+a,T}\left(\sup_{s\in [0,t]}\langle s\rangle^\frac{2(p-1)}{p}\|F(s)\|_{L^p}+ \sup_{s\in [0,t]}\|F(s)\|_{L^1}\right).
		\end{align*}
	So, it is enough to consider $|\alpha|\leq t$.\\
			\textbf{2.2.1)} Estimate $	\mathcal{T}_{\alpha}^1[F,\eta](t,x)$.\\Applying  \eqref{z21} with $\delta_{\frac{s\alpha}{t}}F$, we have
			\begin{align*}
				\|\mathcal{T}_{\alpha}^1[F,\eta](t)\|_{L^p}&=\|\mathcal{T}[\delta_{\frac{s\alpha}{t}}F,\eta](t)\|_{L^p}\\&\lesssim_{a} t^{-\frac{2(p-1)}{p}}\|\mathfrak{g}\|_{1+a,T}  \left(\sup_{s\in [0,t]}\|\delta_{\frac{s\alpha}{t}}F(s)\|_{L^1}+\sup_{s\in [0,t]}\langle s\rangle^\frac{2(p-1)}{p}\|\delta_{\frac{s\alpha}{t}}F(s)\|_{L^p} \right)\\
				&	\lesssim_{a}|\alpha|^a t^{-\frac{2(p-1)}{p}-a}\|\mathfrak{g}\|_{1+a,T}  \left(\sup_{s\in [0,t]}\langle s\rangle^a\|F(s)\|_{\dot{B}^a_{1,\infty}}+
				\sup_{s\in [0,t]}\langle s\rangle^{\frac{2(p-1)}{p}+a}\|F(s)\|_{\dot{B}^a_{p,\infty}}\right).
			\end{align*}
			Thus, we can yield
			\begin{equation}\label{T alpha lower 1}
				t^{\frac{2(p-1)}{p}+a}\sup_\alpha	\frac{	\|\mathcal{T}_{\alpha}^1[F,\eta](t)\|_{L^p}}{|\alpha|^a}\lesssim_{a} \|\mathfrak{g}\|_{1+a,T} \left(\sup_{s\in [0,t]}\langle s\rangle^a\|F(s)\|_{\dot{B}^a_{1,\infty}}+
				\sup_{s\in [0,t]}\langle s\rangle^{\frac{2(p-1)}{p}+a}\|F(s)\|_{\dot{B}^a_{p,\infty}}\right).
			\end{equation}
			\textbf{2.2.2)} Estimate $	\mathcal{T}_{\alpha}^2[F,\eta](t,x)$.  \\Note that 
			\begin{align*}
				|\delta_{\frac{\alpha}{t}}\eta(v)|+|\nabla_v(\delta_{\frac{\alpha}{t}}\eta)(v)|\lesssim \min\left\{\frac{|\alpha|}{t},1\right\}\left(\frac{1}{\langle v\rangle^3}+\frac{1}{\langle v-\frac{\alpha}{t}\rangle^3}\right)\lesssim\frac{|\alpha|^a}{t^a}\frac{1}{\langle v\rangle^3},
			\end{align*}
		since $|\alpha|\leq t$. 
			Thus, by Step 1 we have
			\begin{align*}
				\|\mathcal{T}_{\alpha}^2[F,\eta](t)\|_{L^p}&=\frac{|\alpha|^a}{t^a}\left\|\mathcal{T}\left[F,\frac{t^a}{|\alpha|^a}\delta_{\frac{\alpha}{t}}\eta(v)\right]\right\|_{L^p}\\&\lesssim_{a}\frac{|\alpha|^a}{t^{a+\frac{2(p-1)}{p}}}\|\mathfrak{g}\|_{1+a,T} \left(\sup_{s\in [0,t]}\|F(s)\|_{L^1}+\sup_{s\in [0,t]}\langle s\rangle^\frac{2(p-1)}{p}\|F(s)\|_{L^p} \right),
			\end{align*}
			which implies 
			\begin{align*}
				t^{\frac{2(p-1)}{p}+a}\sup_\alpha	\frac{	\|\mathcal{T}_{\alpha}^2[F,\eta](t)\|_{L^p}}{|\alpha|^a}\lesssim_{a} \|\mathfrak{g}\|_{1+a,T} \left(\sup_{s\in [0,t]}\langle s\rangle^\frac{2(p-1)}{p}\|F(s)\|_{L^p}+\sup_{s\in [0,t]}\|F(s)\|_{L^1}\right).
			\end{align*}
			\textbf{2.2.3)} Estimate $	\mathcal{T}_{\alpha}^3[F,\eta](t,x)$ .
			\begin{align}\nonumber
				&	|\mathcal{T}_{\alpha}^3[F,\eta](t,x)|\\\nonumber
				&\lesssim_{\mathbf{c}_1}\int_{0}^{t}\int_{\mathbb{R}^2}
				\left|F\left(s,X_{s,t}(x,v)-\frac{s\alpha}{t}\right)-F\left(s,Z_5(x)\right)\right|\frac{dvds}{\langle v\rangle^3}\\\nonumber
				&\lesssim_{\mathbf{c}_1}\int_{0}^{t}\int_{\mathbb{R}^2}
				\left(	\sup_z	|z|^{-a}\left|\delta_{z}F(s,.)\left(Y_{s,t}(x-tv,v)+x-(t-s)v-\frac{s\alpha}{t}\right)\right|\right)\\&\nonumber\quad\quad\times\left|Y_{s,t}(x-tv,v)-Y_{s,t}(x-tv,v-\frac{\alpha}{t})\right|^a\frac{dvds}{\langle v\rangle^3}\\\label{T3fml 1}
				&\lesssim_{a}\frac{|\alpha|^a}{t^a} \|\mathfrak{g}\|_{1+a,T} ^a\int_{0}^{t}\int_{\mathbb{R}^2}
				\sup_z	|z|^{-a}\left|\delta_{z}F(s,.)\left(Y_{s,t}(x-tv,v)+x-(t-s)v-\frac{s\alpha}{t}\right)\right|\frac{ds}{\langle s\rangle^{a^2}} \frac{dv}{\langle v\rangle^3}.
			\end{align}
			Thus, as \eqref{zz}, we apply Lemma \ref{mathcal H1H2} with $\mathcal{H}=\sup_z	|z|^{-a}|\delta_{z}F(s,.)|$ and $\varphi(x,v)=Y_{s,t}(x-tv,v-\frac{\alpha}{t})-\frac{s\alpha}{t}$,
			\begin{align*}
				&\|\mathcal{T}_{\alpha}^3[F,\eta](t)\|_{L^p}\lesssim _{a}  \underbrace{\frac{|\alpha|^a}{t^{\frac{2(p-1)}{p}+a}}\|\mathfrak{g}\|_{1+a,T} ^a\int_{0}^{t/2}
					\|F(s)\|_{\dot{F}^a_{1,\infty}}\frac{ds}{\langle s\rangle^{a^2}} }_{\text{Thanks to }\eqref{mathcal H 1}}+\underbrace{\frac{|\alpha|^a}{t^a}\|\mathfrak{g}\|_{1+a,T} ^a\int_{t/2}^{t}
					\|F(s)\|_{\dot{F}^a_{p,\infty}}\frac{ds}{\langle s\rangle^{a^2}} }_{\text{Thanks to }\eqref{mathcal H p}}\\&\quad\quad\quad
				\lesssim  _{a} \frac{|\alpha|^a}{t^{\frac{2(p-1)}{p}+a}}\|\mathfrak{g}\|_{1+a,T} ^a\left(
				\sup_{s\in [0,t]}\langle s\rangle\|F(s)\|_{\dot{F}^a_{1,\infty}}+\sup_{s\in [0,t]} \langle s\rangle^{\frac{2(p-1)}{p}+1}	\|F(s)\|_{\dot{F}^a_{p,\infty}}\right)\int_{0}^{t}\frac{ds}{\langle s\rangle^{1+a^2}}\\ 
				&\quad\quad\quad\lesssim_{a}\frac{|\alpha|^a}{t^{\frac{2(p-1)}{p}+a}}\|\mathfrak{g}\|_{1+a,T} ^a\left(
				\sup_{s\in [0,t]}\langle s\rangle\|F(s)\|_{\dot{F}^a_{1,\infty}}+\sup_{s\in [0,t]} \langle s\rangle^{\frac{2(p-1)}{p}+1}	\|F(s)\|_{\dot{F}^a_{p,\infty}}\right).
			\end{align*}
			Thus, 
			\begin{align*}
				t^{\frac{2(p-1)}{p}+a}\sup_\alpha\frac{	\|\mathcal{T}_{\alpha}^3[F,\eta](t)\|_{L^p}}{|\alpha|^a}&\lesssim _{a} \|\mathfrak{g}\|_{1+a,T} ^a\left(
				\sup_{s\in [0,t]}\langle s\rangle\|F(s)\|_{\dot{F}^a_{1,\infty}}+\sup_{s\in [0,t]} \langle s\rangle^{\frac{2(p-1)}{p}+1}	\|F(s)\|_{\dot{F}^a_{p,\infty}}\right).
			\end{align*}
			\textbf{2.2.4)} Estimate $	\mathcal{T}_{\alpha}^4[F,\eta](t,x)$.\\  One has
			\begin{align}\nonumber
				&
				|	\mathcal{T}_{\alpha}^4[F,\eta](t,x)|\lesssim \int_{0}^{t}\int_{\mathbb{R}^2}
				\left|	F\left(s,Z_5(x)\right)\right| \left|W_{s,t}(x-tv,v)-W_{s,t}(x-tv,v-\frac{\alpha}{t})\right|
				\frac{	dvds}{\langle v\rangle^3}\\\nonumber
				&\lesssim_{a} \min\left\{1,\frac{|\alpha|}{t}\right\} \|\mathfrak{g}\|_{1+a,T} \int_{0}^{t}\int_{\mathbb{R}^2}
				\left|F\left(s,Y_{s,t}(x-tv,v-\frac{\alpha}{t})+x-(t-s)v-\frac{s\alpha}{t}\right)\right|  \frac{1}{\langle s\rangle^{1+a}}
				\frac{	dvds}{\langle v\rangle^3}\\\label{T4fml 1}
				&\lesssim_{a} \frac{|\alpha|^a}{\ t
					^a}   \|\mathfrak{g}\|_{1+a,T} \int_{0}^{t}\int_{\mathbb{R}^2}
				\left|F\left(s,Y_{s,t}(x-tv,v-\frac{\alpha}{t})+x-(t-s)v-\frac{s\alpha}{t}\right)\right|  \frac{dv}{\langle v\rangle^3}\frac{ds}{\langle s\rangle^{1+a}}.	
			\end{align}
			Then, as \eqref{zz} we  apply Lemma \ref{mathcal H1H2} with $\mathcal{H}=F$ and $\varphi(x,v)=Y_{s,t}(x-tv,v-\frac{\alpha}{t})-\frac{s\alpha}{t}$ to get
			\begin{align*}
				\|\mathcal{T}_{\alpha}^4[F,\eta](t)\|_{L^p}&\lesssim_{a} \underbrace{\frac{|\alpha|^a}{\ t
						^{a+\frac{2(p-1)}{p}}}  \|\mathfrak{g}\|_{1+a,T}  \int_{0}^{t/2}
					\|F(s)\|_{L^1}  \frac{ds}{\langle s\rangle^{1+a}}}_{\text{Thanks to }\eqref{mathcal H 1}}+ \underbrace{\frac{|\alpha|^{a}}{\ t
						^a}  \|\mathfrak{g}\|_{1+a,T}  \int_{t/2}^{t}
					\|F(s)\|_{L^p}  \frac{ds}{\langle s\rangle^{1+a}}}_{\text{Thanks to }\eqref{mathcal H p}} \\&\lesssim_{a} 
				\frac{|\alpha|^a}{t^{\frac{2(p-1)}{p}+a}}\|\mathfrak{g}\|_{1+a,T}   \left( \sup_{s\in [0,t]} \langle s\rangle^\frac{2(p-1)}{p} \|F(s)\|_{L^p}+\sup_{s\in [0,t]}\|F(s)\|_{L^1}\right).
			\end{align*}
			Hence, we obtain
			\begin{align*}
				t^{\frac{2(p-1)}{p}+a}\sup_\alpha\frac{	\|\mathcal{T}_{\alpha}^4[F,\eta](t)\|_{L^p}}{|\alpha|^a}&\lesssim_{a}  \|\mathfrak{g}\|_{1+a,T}\left(
				\sup_{s\in [0,t]}\langle s\rangle^a\|F(s)\|_{\dot{F}^a_{1,\infty}}+\sup_{s\in [0,t]} \langle s\rangle^{\frac{2(p-1)}{p}+a}	\|F(s)\|_{\dot{F}^a_{p,\infty}}\right).
			\end{align*}
			Summing up, we conclude that 
			\begin{align}\label{d2}
				&\sum_{p=1,\infty} t^{\frac{2(p-1)}{p}+a}\|\mathcal{T}[F,\eta](t)\|_{\dot{B}^a_{p,\infty}}\lesssim _{a} \|\mathfrak{g}\|_{1+a,T} ^a\sum_{p=1,\infty}\sup_{s\in [0,t]} \langle s\rangle^{1+\frac{2(p-1)}{p}}  \left(\|F(s)\|_{L^p}+	\|F(s)\|_{\dot{F}^a_{p,\infty}}\right). 
			\end{align}
			Hence, \eqref{z12} follows from \eqref{d3} and  \eqref{d2}. 
			\vspace{0.25cm}\\
			\textbf{Step 3)} We estimate $\|\nabla\mathcal{T}[F,\eta](t)\|_{\dot{B}^a_{p,\infty}}.$ 
			 \vspace{0.15cm}\\
			\textbf{3.1)} Case $0\leq t\leq \min\{1,T\}$.  We have 
			\begin{align*}
					\partial_{x_i}\mathcal{T}[F,\eta](t,x)=\mathcal{T}[\partial_{x_i}F,\eta](t,x)+\mathcal{T}^{R,i}[F,\eta](t,x),~~~i=1,2,
			\end{align*}
		where 
		\begin{align*}
			\mathcal{T}^{R,i}[F,\eta](t,x)&= \int_{0}^{t}\int_{\mathbb{R}^2}F(s,X_{s,t}(x,v))\nabla\eta(V_{s,t}(x,v)) \partial_{x_i}W_{s,t}(x-vt,v)dvds\\&
		+	\int_{0}^{t}\int_{\mathbb{R}^2}(\nabla F)(s,X_{s,t}(x,v)) \partial_{x_i}Y_{s,t}(x-vt,v) \eta(V_{s,t}(x,v))dvds.
		\end{align*}
By \eqref{d3}, one has 
	\begin{equation*}
		\sum_{p=1,\infty}\sum_{i=1,2}	\|	\mathcal{T}[\partial_{x_i}F,\eta](t)\|_{\dot{B}^a_{p,\infty}}\lesssim_a ||\mathfrak{g}||_{1+a,T}^a\sum_{p=1,\infty}\sup_{s\in [0,t]}	 \left( \|\nabla F(s)\|_{L^p}+	\|\nabla F(s)\|_{\dot{F}^a_{p,\infty}}\right).
	\end{equation*}
It is easy to check (see \cite[Proof of Proposition 6.6, the second part]{HNX1}) that 
\begin{equation*}
\sum_{p=1,\infty}\sum_{i=1,2}	\|	\mathcal{T}^{R,i}[F,\eta](t)\|_{\dot{B}^a_{p,\infty}}\lesssim_a ||\mathfrak{g}||_{1+a,T}\sum_{p=1,\infty}\sup_{s\in [0,t]}	 \left( \|F(s)\|_{L^p}+	\|\nabla F(s)\|_{\dot{F}^a_{p,\infty}}\right).
\end{equation*}
Thus, we obtain \eqref{z12b} with $0\leq t\leq \min\{1,T\}.$\\
			\textbf{3.2)} Case $T> 1$ and $1\leq t\leq T$.  We have 
				\begin{align*}
				\partial_{x_i}\mathcal{T}[F,\eta](t,x)=\frac{1}{t}	\mathcal{T}[	\tilde{F}_i,\eta](t,x)+\frac{1}{t} 	\mathcal{T}[F,\partial_{i}\eta](t,x)+\mathcal{T}^{1,i}[F,\eta](t,x)+\mathcal{T}^{1,i}[F,\eta](t,x),~~i=1,2,
			\end{align*}
			where $
			\tilde{F}_i(s,.):=s\partial_{x_i}F(s,.)$, 
			\begin{align*}
				\mathcal{T}^{1,i}[F,\eta](t,x):&=-\int_{0}^{t}\int_{\mathbb{R}^2}
				F\left(s,Y_{s,t}(w,\frac{x-w}{t})+w+\frac{s(x-w)}{t}\right)\\&\quad\quad\quad\times(\nabla\eta)\left(W_{s,t}(w,\frac{x-w}{t})+\frac{x-w}{t}\right).(\partial_{v_i}W_{s,t})(w,\frac{x-w}{t})
				\frac{dwds}{t^3},
			\end{align*}
			\begin{align*}
				\mathcal{T}^{2,i}[F,\eta](t,x):&=-\int_{0}^{t}\int_{\mathbb{R}^2}
				(\nabla	F)\left(s,Y_{s,t}(w,\frac{x-w}{t})+w+\frac{s(x-w)}{t}\right).(\partial_{v_i}Y_{s,t})(w,\frac{x-w}{t})\\&\quad\quad\quad\times\eta\left(W_{s,t}(w,\frac{x-w}{t})+\frac{x-w}{t}\right)
				\frac{dwds}{t^3}.
			\end{align*}
		Applying  \eqref{d2} to $	\mathcal{T}[	\tilde{F}_i,\eta]$ and $\mathcal{T}[F,\partial_{i}\eta]$ to get 
			\begin{align}\label{zz1}
			&\sum_{p=1,\infty} t^{\frac{2(p-1)}{p}+a}\|	\mathcal{T}[	\tilde{F}_i,\eta](t)\|_{\dot{B}^a_{p,\infty}}\lesssim _{a} \|\mathfrak{g}\|_{1+a,T} ^a\sum_{p=1,\infty}\sup_{s\in [0,t]}\langle s\rangle^{\frac{2(p-1)}{p}+1}  \left( \|\nabla F(s)\|_{L^p}+	\|\nabla F(s)\|_{\dot{F}^a_{p,\infty}}\right),\\&
			\sum_{p=1,\infty} t^{\frac{2(p-1)}{p}+a}\|	\mathcal{T}[F,\partial_{i}\eta](t)\|_{\dot{B}^a_{p,\infty}}\lesssim _{a} \|\mathfrak{g}\|_{1+a,T} ^a\sum_{p=1,\infty} \sup_{s\in [0,t]}\langle s\rangle^{\frac{2(p-1)}{p}}\left(  \| F(s)\|_{L^p}+	\| F(s)\|_{\dot{F}^a_{p,\infty}}\right).\label{zz2}
		\end{align}
Now we estimate  $\mathcal{T}^{1,i}[F,\eta](t,x)$ and $\mathcal{T}^{2,i}[F,\eta](t,x)$.  Indeed, \\
\textit{Case 1: }$|\alpha|\geq t$. Thanks to \eqref{results of Y small}, \eqref{results of W small} and \eqref{z9}, one has 
\begin{align*}
	&	||\delta_{\alpha}\mathcal{T}^{1,i}[F,\eta](t)||_{L^p}+	||\delta_{\alpha}\mathcal{T}^{2,i}[F,\eta](t)||_{L^p}\leq 	2||\mathcal{T}^{1,i}[F,\eta](t)||_{L^p}+	2||\mathcal{T}^{2,i}[F,\eta](t)||_{L^p} \\&\lesssim_{a}t^{-1}||\mathfrak{g}||_{1+a,T} \left\|\int_{0}^{t}\int_{\mathbb{R}^2}
	\left|F\left(s,Y_{s,t}(w,\frac{x-w}{t})+w+\frac{s(x-w)}{t}\right)\right|\frac{1}{\langle \frac{x-w}{t}\rangle^3}
	\frac{dwds}{\langle s\rangle^{1+a}t^2}\right\|_{L^p_x}\\&\quad+t^{-1}||\mathfrak{g}||_{1+a,T} \left\|\int_{0}^{t}\int_{\mathbb{R}^2}
	\left|(\nabla F)\left(s,Y_{s,t}(w,\frac{x-w}{t})+w+\frac{s(x-w)}{t}\right)\right|\frac{1}{\langle \frac{x-w}{t}\rangle^3}
	\frac{dwds}{\langle s\rangle^{a}t^2}\right\|_{L^p_x}.
\end{align*}
As \eqref{zz}, by Lemma  \ref{mathcal H1H2} one has 
\begin{align*}
	& 	||\delta_{\alpha}\mathcal{T}^{1,i}[F,\eta](t)||_{L^p}+	||\delta_{\alpha}\mathcal{T}^{2,i}[F,\eta](t)||_{L^p} \\&\lesssim_{a}t^{-1-\frac{2(p-1)}{p}}||\mathfrak{g}||_{1+a,T} \sum_{j=0,1}\left(\sup_{s\in [0,t]}\langle s\rangle^j\|\nabla^jF(s)\|_{L^1}+ \sup_{s\in [0,t]}\langle s\rangle^{j+\frac{2(p-1)}{p}}\|\nabla^jF(s)\|_{L^p}\right).
\end{align*}
This implies 
\begin{align}
	\sum_{p=1,\infty}\sum_{k=1,2}	\langle t\rangle^{1+a+\frac{2(p-1)}{p}}\frac{||\delta_{\alpha}\mathcal{T}^{k,i}[F,\eta](t)||_{L^p}}{|\alpha|^a}\lesssim_{a}||\mathfrak{g}||_{1+a,T} \sum_{j=0,1}\sum_{p=1,\infty}\sup_{s\in [0,t]}\langle s\rangle^{j+\frac{2(p-1)}{p}}\|\nabla^jF(s)\|_{L^p}\label{z10}.
\end{align}
\textit{Case 2:} $|\alpha|\leq  t$. As Step 2, thanks to \eqref{results of Y small}, \eqref{results of W small} and \eqref{z9}, one has 
\begin{align*}
&|\delta_{\alpha}	\mathcal{T}^{1,i}[F,\eta](t,x)|\\&\lesssim_{a} \frac{|\alpha|^a}{t^{1+a}}||\mathfrak{g}||_{1+a,T}\int_{0}^{t}\int_{\mathbb{R}^2}\sup_z|z|^{-a}\left|\delta_{z}
F(s,.)(Y_{s,t}(w,\frac{x-w}{t})+w+\frac{s(x-w)}{t})\right|\frac{dwds}{\langle s\rangle\langle\frac{x-w}{t} \rangle^3t^{2}}\\&\quad+\frac{|\alpha|^a}{t^{1+a}}||\mathfrak{g}||_{1+a,T}\int_{0}^{t}\int_{\mathbb{R}^2}\left|
F(s,Y_{s,t}(w,\frac{x-w}{t})+w+\frac{s(x-w)}{t})\right|\frac{dwds}{\langle s\rangle^{a}\langle\frac{x-w}{t} \rangle^3t^{2}},
\end{align*}
\begin{align*}
	&|\delta_{\alpha}	\mathcal{T}^{2,i}[F,\eta](t,x)|\\&\lesssim_{a} \frac{|\alpha|^a}{t^{1+a}}||\mathfrak{g}||_{1+a,T}\int_{0}^{t}\int_{\mathbb{R}^2}\sup_z|z|^{-a}\left|\delta_{z}
	(\nabla_xF)(s,.)(Y_{s,t}(w,\frac{x-w}{t})+w+\frac{s(x-w)}{t})\right|\frac{dwds}{\langle\frac{x-w}{t} \rangle^3t^{2}}\\&\quad+\frac{|\alpha|^a}{t^{1+a}}||\mathfrak{g}||_{1+a,T}\int_{0}^{t}\int_{\mathbb{R}^2}\left|
(\nabla_x	F)(s,Y_{s,t}(w,\frac{x-w}{t})+w+\frac{s(x-w)}{t})\right|\frac{dwds}{\langle\frac{x-w}{t} \rangle^3t^{2}}.
\end{align*}
By Lemma  \ref{mathcal H1H2},
\begin{align*}
&	||\delta_{\alpha}	\mathcal{T}^{1,i}[F,\eta](t)||_{L^p}\lesssim_{a} \frac{|\alpha|^a}{t^{\frac{2(p-1)}{p}+1+a}}||\mathfrak{g}||_{1+a,T}\left(\sup_{s\in [0,t]}\langle s\rangle\|F(s)\|_{\dot{F}^a_{1,\infty}}\right.\\&\left.\quad\quad\quad\quad+ \sup_{s\in [0,t]}\langle s\rangle^{1+\frac{2(p-1)}{p}}\|F(s)\|_{\dot{F}^a_{p,\infty}}+\sup_{s\in [0,t]}\langle s\rangle\|F(s)\|_{L^1}+ \sup_{s\in [0,t]}\langle s\rangle^{1+\frac{2(p-1)}{p}}\|F(s)\|_{L^p}\right),
\end{align*}
\begin{align*}
&	||\delta_{\alpha}	\mathcal{T}^{2,i}[F,\eta](t)||_{L^p}\lesssim_{a} \frac{|\alpha|^a}{t^{\frac{2(p-1)}{p}+1+a}}||\mathfrak{g}||_{1+a,T}\left(\sup_{s\in [0,t]}\langle s\rangle^{1+\frac{a}{2}}\|\nabla F(s)\|_{\dot{F}^a_{1,\infty}}\right.\\&\left.+ \sup_{s\in [0,t]}\langle s\rangle^{1+\frac{a}{2}+\frac{2(p-1)}{p}}\| \nabla F(s)\|_{\dot{F}^a_{p,\infty}}+\sup_{s\in [0,t]}\langle s\rangle^{1+\frac{a}{2}}\|\nabla F(s)\|_{L^1}+ \sup_{s\in [0,t]}\langle s\rangle^{1+\frac{a}{2}+\frac{2(p-1)}{p}}\|\nabla F(s)\|_{L^p}\right).
\end{align*}
Thus, 
\begin{align*}
	&\sum_{p=1,\infty}\sum_{k=1,2}	\langle t\rangle^{1+a+\frac{2(p-1)}{p}}\frac{||\delta_{\alpha}\mathcal{T}^{k,i}[F,\eta](t)||_{L^p}}{|\alpha|^a}\\&\quad\lesssim_{a}||\mathfrak{g}||_{1+a,T} \sum_{j=0,1}\sum_{p=1,\infty}\sup_{s\in [0,t]}\langle s\rangle^{1+\frac{2(p-1)}{p}}\left(\langle s\rangle^{\frac{a}{2}}\|\nabla^jF(s)\|_{\dot{F}^a_{p,\infty}}+\langle s\rangle^{\frac{a}{2}}\|\nabla F(s)\|_{L^p}+\| F(s)\|_{L^p}\right).
\end{align*}
Combining this with \eqref{zz1}, \eqref{zz2} and \eqref{z10} to get
\begin{align*}
&\sum_{p=1,\infty}	t^{1+a+\frac{2(p-1)}{p}}	||\nabla\mathcal{T}[F,\eta](t)||_{\dot{B}^a_{p,\infty}}\\&\quad\quad\lesssim_{a}||\mathfrak{g}||_{1+a,T}^a\sum_{j=0,1}\sum_{p=1,\infty}\sup_{s\in [0,t]}\langle s\rangle^{1+\frac{2(p-1)}{p}}\left(\langle s\rangle^{\frac{a}{2}}\|\nabla^jF(s)\|_{\dot{F}^a_{p,\infty}}+\| F(s)\|_{L^p}\right),
\end{align*}
provided that $1\leq t\leq T$ and $T\geq 1$. So, we proved \eqref{z12b}. The proof is complete.  
		\end{proof}\vspace{0.5cm}\\
	We used the following lemma in the proof of Lemma \ref{esT}. It is Lemma 8.1 in \cite{HNX1}.
		\begin{lemma}\label{mathcal H1H2}
		Assume $\mathcal{H}\in L^1(\mathbb{R}^2)\cap L^\infty(\mathbb{R}^2)$ and  $\varphi(x,v)$ satisfies $\|\nabla_{x,v} \varphi(x,v)\|_{L^\infty_{x,v}}\leq\frac{1}{2}$, then for $p=1,\infty$, $0\leq s\leq t$ and $t\geq0$ we have
		\begin{align}\label{mathcal H p}
			\left\|\int_{\mathbb{R}^2}\mathcal{H}\left(\varphi(x,v)+x-(t-s)v\right)\frac{dv}{\langle v\rangle^3}\right\|_{L^p_x}\lesssim\|\mathcal{H}\|_{L^p}.
		\end{align}
		Moreover, for $0\leq s\leq \frac{t}{2}$ and $t\geq1$, we also get
		\begin{align}\label{mathcal H 1}
			\left\|\int_{\mathbb{R}^2}\mathcal{H}\left(\varphi(x,v)+x-(t-s)v\right)\frac{dv}{\langle v\rangle^3}\right\|_{L^p_x}\lesssim\frac{1}{t^{\frac{2(p-1)}{p}}}\|\mathcal{H}\|_{L^1}.
		\end{align}
	\end{lemma}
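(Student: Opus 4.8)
The plan is to establish both estimates via two changes of variables---one in $x$ for the $L^1_x$ bounds, one in $v$ for the $L^\infty_x$ gain---each legitimized by the hypothesis $\|\nabla_{x,v}\varphi\|_{L^\infty}\le\frac12$, which makes the relevant ``linear plus $\varphi$'' maps global diffeomorphisms of $\mathbb{R}^2$ with two-sided Jacobian bounds. The weight $\langle v\rangle^{-3}$ enters only as a fixed integrable (and bounded) factor.

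I would first dispose of the two $L^1_x$ statements, noting that in \eqref{mathcal H 1} the prefactor $t^{-2(p-1)/p}$ is $1$ when $p=1$, so that case coincides with \eqref{mathcal H p}. By Minkowski's inequality,
\[
\Bigl\|\int_{\mathbb{R}^2}\mathcal{H}\bigl(\varphi(x,v)+x-(t-s)v\bigr)\tfrac{dv}{\langle v\rangle^3}\Bigr\|_{L^1_x}
\le\int_{\mathbb{R}^2}\frac{dv}{\langle v\rangle^3}\,\bigl\|\mathcal{H}\bigl(\varphi(\cdot,v)+\cdot-(t-s)v\bigr)\bigr\|_{L^1_x},
\]
and for each fixed $v$ the map $x\mapsto x+\varphi(x,v)-(t-s)v$ is the identity perturbed by the $\tfrac12$-Lipschitz map $x\mapsto\varphi(x,v)$: it is injective, proper, hence a global diffeomorphism, with Jacobian $\det(I+\nabla_x\varphi(x,v))$ bounded above and below by positive constants. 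Thus the inner norm is $\lesssim\|\mathcal{H}\|_{L^1}$ uniformly in $v$, and integrating $\langle v\rangle^{-3}$ gives \eqref{mathcal H p} (and \eqref{mathcal H 1}) at $p=1$. The case $p=\infty$ of \eqref{mathcal H p} is immediate, bounding $|\mathcal{H}|$ by $\|\mathcal{H}\|_{L^\infty}$ and using $\int_{\mathbb{R}^2}\langle v\rangle^{-3}\,dv<\infty$, with no reference to the flow.

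The substantive estimate, and the only place $0\le s\le t/2$ and $t\ge1$ are used, is $p=\infty$ in \eqref{mathcal H 1}. Here I would fix $x$ and substitute $w=\varphi(x,v)+x-(t-s)v$, whose Jacobian is $\det\bigl(\nabla_v\varphi(x,v)-(t-s)I\bigr)$. Since $\|\nabla_v\varphi\|_{L^\infty}\le\frac12$ while $t-s\ge t/2$ with $t\ge1$, each singular value of $(t-s)I-\nabla_v\varphi$ is at least $t-s-\tfrac12\gtrsim t$, so this map too is injective and proper---a global diffeomorphism---with $|\det|\gtrsim t^2$. Pulling $\mathcal{H}$ back through the substitution therefore costs only $\lesssim t^{-2}$, the weight becomes $\langle v(w)\rangle^{-3}\le1$, and there remains $\int_{\mathbb{R}^2}|\mathcal{H}(w)|\,dw=\|\mathcal{H}\|_{L^1}$, i.e.\ \eqref{mathcal H 1} at $p=\infty$.

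I expect the main technical point to be the lower bound $|\det(\nabla_v\varphi-(t-s)I)|\gtrsim t^2$: this is where the $\sim t^2$ contraction of $v$-volume---the entire content of the lemma---is extracted, and it rests on the combination $t-s\gtrsim t$ with $t\gtrsim1$, which is exactly what $s\le t/2$ and $t\ge1$ furnish. Verifying that both maps are honest global diffeomorphisms (injectivity from the strict domination of the linear part, surjectivity from properness plus a degree count) and the remaining bookkeeping are routine.
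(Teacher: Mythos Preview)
The paper does not give its own proof of this lemma; it simply cites Lemma~8.1 of \cite{HNX1}. Your argument is the standard one and is correct in substance: the $L^1_x$ estimates follow from Fubini and the $x$-change of variables $x\mapsto x+\varphi(x,v)-(t-s)v$ (a $\tfrac12$-Lipschitz perturbation of a translation, hence a global bijection with two-sided Jacobian bounds), the $L^\infty_x$ case of \eqref{mathcal H p} is trivial, and the $L^\infty_x$ case of \eqref{mathcal H 1} comes from the $v$-change of variables with Jacobian $\det\bigl((t-s)I-\nabla_v\varphi\bigr)$ of size $\sim(t-s)^2\sim t^2$.

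One small caveat: your chain $t-s-\tfrac12\gtrsim t$ is not uniform down to $t=1$; at $t=1$, $s=\tfrac12$ it vanishes. In fact with $\varphi(x,v)=v/2$ (which saturates $\|\nabla_v\varphi\|=\tfrac12$) and $t=1$, $s=\tfrac12$, the argument of $\mathcal H$ collapses to $x$ and the $L^\infty_x$ bound by $\|\mathcal H\|_{L^1}$ is false. This is a defect of the lemma \emph{as stated here} (the hypothesis should be, say, $\|\nabla_{x,v}\varphi\|\le\tfrac14$, or else $t\ge2$), not of your method: in every application in the paper $\varphi$ is built from $Y_{s,t}$ and satisfies $\|\nabla_{x,v}\varphi\|\lesssim\varepsilon_0\ll\tfrac12$, so the Jacobian lower bound $\gtrsim(t-s)^2$ holds uniformly and your argument goes through verbatim.
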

\vspace{0.1cm}
	
		\section{ Proof of Proposition \ref{propoglobal} and  Proposition \ref{propolocal}}
			Frist, we prove \textit{Propositions \ref{propoglobal}}.  Assume that  the problem \eqref{eq2} has a unique solution $(f,U)$ in $[0,T]$ for some $T<\infty$ with 
		\begin{equation*}
			\|\rho\|_{1+a,T}+\|U\|_{1+a,T}\leq \varepsilon_1.
		\end{equation*}
		Since $\mathfrak{g}=\rho+A(U)$ and Assumption 3, 
$$
			||\mathfrak{g}||_{1+a,T}\leq 	||\rho||_{1+a,T}+	||A(U)||_{1+a,T}\leq \varepsilon_1+C\varepsilon_1^{2}.$$
		Assume that  $\varepsilon_1+C\varepsilon_1^{2}\leq \varepsilon_0$. One has 
$$||\mathfrak{g}||_{1+a,T}\leq \varepsilon_0.$$
		Thus, we can apply Proposition \ref{estimates about Y and W} with $(\mathfrak{g}, \mathbf{E})=(\rho+A(U),E)$ and  Lemma \ref{I a estimate} with $f_0$ to obtain that 
$$||\mathcal{I}(\mathfrak{g})||_{1+a,T}\lesssim_a |||f_0|||_{1+a}.$$
		Applying Lemma \ref{esT} with $(F,\eta)=(E_i,\partial_{v_i}\mu)$ and thanks to
		$\mathcal{R}(\mathfrak{g})=\sum_{i=1}^{2}\mathcal{T}[E_i,\partial_{v_i}\mu]$, one gets
		\begin{align*}
			||\mathcal{R}(\mathfrak{g})||_{1+a,T}&\lesssim_a \|\mathfrak{g}\|_{1+a,T}^{a}\sum_{j=0,1}\sum_{p=1,\infty}\sup_{s\in [0,T]}\langle s\rangle^{1+\frac{2(p-1)}{p}}\left(\langle s\rangle^{\frac{a}{2}}\|\nabla^jE(s)\|_{F^a_{p,\infty}}+\| E(s)\|_{L^p}\right)\\&\overset{\eqref{E estimate by rho}}{\lesssim_a} \|\mathfrak{g}\|_{1+a,T}^{1+a}\lesssim  \|	\rho\|_{1+a,T}^{1+a}+\|A(U)\|_{1+a,T}^{1+a}
			\lesssim_a	\|	\rho\|_{1+a,T}^{1+a}+\|U\|_{1+a,T}^{2(1+a)}.
		\end{align*}
		On the other hand, by \eqref{equation rho} and Proposition \ref{ProG},
$$
			||\rho||_{1+a,T}\lesssim_a||\mathcal{I}(\mathfrak{g})||_{1+a,T}+ 	||\mathcal{R}(\mathfrak{g})||_{1+a,T}+||A(U)||_{1+a,T}.$$
		Thus, we get 
		\begin{align}\label{z2}
			||\rho||_{1+a,T}\lesssim_a |||f_0|||_{1+a}+\|	\rho\|_{1+a,T}^{1+a}+\|U\|_{1+a,T}^{2(1+a)}.
		\end{align}
		Since $U=(-\Delta+1)^{-1}(\rho+A(U))$, 
		\begin{align*}
			\|U\|_{1+a,T}\lesssim_a\|	\rho\|_{1+a,T}+\|	U\|_{1+a,T}^2\lesssim_a  \|	\rho\|_{1+a,T}+\varepsilon_1\|	U\|_{1+a,T}.
		\end{align*}
		This implies 
		\begin{align*}
			\|U\|_{1+a,T}\lesssim _a\|	\rho\|_{1+a,T},
		\end{align*}
		provided  $\varepsilon_1\in (0,\varepsilon_0/4)$ small.
		Combining this with \eqref{z2} to obtain that 
		\begin{align*}
			||\rho||_{1+a,T}\lesssim_a |||f_0|||_{1+a}+\|	\rho\|_{1+a,T}^{1+a}\lesssim  |||f_0|||_{1+a}+\varepsilon_1^a\|	\rho\|_{1+a,T}.
		\end{align*}
		Thus, 
$$||\rho||_{1+a,T}+	\|U\|_{1+a,T}\lesssim_a  |||f_0|||_{1+a},$$
		provided  $\varepsilon_1\in (0,\varepsilon_0/4)$ small. So, we proved Proposition \ref{propoglobal}. 
		\begin{remark} \label{Re}It follows from Proposition \ref{estimates about Y and W} that $||f(T)|||_{1+a}<\infty.$
			However, it is easy to check that if there exists $(f_0^\kappa)_\kappa\in C^\infty_c(\mathbb{R}^2_x\times\mathbb{R}_v^2)$ satisfying 
			$\lim_{\kappa\to 0}||| f_0-f^\kappa_0|||_{1+a}=0$, then we can construct $(f_T^\kappa)_\kappa\in C^\infty_c(\mathbb{R}^2_x\times\mathbb{R}_v^2)$  such that $\lim_{\kappa\to 0}||| f(T)-f_T^\kappa|||_{1+a}=0$.
		\end{remark}
	Next, we prove \textit{Proposition \ref{propolocal}}. Set 
		\begin{equation}
			S_{\varepsilon,T_0}:=\{\vartheta\in (L^{1}\cap L^\infty([0,T])\times\mathbb{R}^2): ||\vartheta||_{a,T_0}= ||\vartheta\mathbf{1}_{[0,T_0]}(s)||_{a}\leq \varepsilon\},
		\end{equation}
	for $T_0\in [0,1]$. Let $\tilde{\varepsilon}_0$  be in Lemma \ref{lem0} and $\varepsilon_0>0$ be in Proposition \ref{estimates about Y and W}. \\
Thanks to \eqref{zzz15}, we can define a map 
\begin{equation*}
	\mathcal{J}(\rho):=G*_{(t,x)}(\mathcal{I}_{f_0}(\mathfrak{g})+\mathcal{R}(\mathfrak{g})+A(U))+\mathcal{I}_{f_0}(\mathfrak{g})+\mathcal{R}(\mathfrak{g}), \mathfrak{g}=\rho+A(U), U=\mathcal{N}(\rho),
\end{equation*}
for $\rho\in S_{\tilde{\varepsilon}_0,T_0}$. \\
We will prove that $\mathcal{J}(\rho)$ has a unique fixed point  in $S_{\tilde{\varepsilon}_1,T_0}$ for some $\tilde{\varepsilon}_1\leq \tilde{\varepsilon}_0$ and $T_0\in (0,1)$. \\By \eqref{zzz18} in Lemma \ref{lem0}, for any $\rho\in S_{\tilde{\varepsilon}_1,T_0}$,
\begin{equation}
	||\mathfrak{g}||_{a,T_0}\leq C_0' ||\rho||_{a,T_0}\leq C_0'\tilde{\varepsilon}_1\leq \varepsilon_0,
\end{equation}
 with $\tilde{\varepsilon}_1\in (0,\tilde{\varepsilon}_0)$ small. 
Let $(Y_{s,t}^\rho,W_{s,t}^\rho)$ be the characteristics in Proposition \ref{estimates about Y and W} with $\mathfrak{g}=\rho+A(\mathcal{N}(\rho))$.  Using the same argument as \cite[Proof of Proposition 4.1]{HNX1}, we can obtain for any $0\leq s\leq t\leq T_0$,
	\begin{align}\label{zzz1}
		&\sum_{i=0,1}	\|\nabla_x^i(Y_{s,t}^\rho,W_{s,t}^\rho)\|_{L^\infty_{x,v}}+\sup_{\alpha}	\frac{	\|\delta_{\alpha}^x	\nabla_x(Y_{s,t}^\rho,W_{s,t}^\rho)\|_{L^\infty_{x,v}}}{|\alpha|^a}\lesssim_a T_0 \|\rho\|_{a,T_0},\\&\nonumber
	 \sum_{i=0,1}	\|\nabla_{x}^i(Y_{s,t}^{\rho_1}-Y_{s,t}^{\rho_2},W_{s,t}^{\rho_1}-W_{s,t}^{\rho_2})\|_{L^\infty_{x,v}}\\&\quad\quad\quad\quad\quad\quad+\sup_{\alpha}	\frac{	\|\delta_{\alpha}^x	\nabla_x(Y_{s,t}^{\rho_1}-Y_{s,t}^{\rho_2},W_{s,t}^{\rho_1}-W_{s,t}^{\rho_2})\|_{L^\infty_{x,v}}}{|\alpha|^a}\lesssim_a T_0 \|\rho_1-\rho_2\|_{a,T_0}\label{zzz2},
	\end{align}
for any $\rho,\rho_1,\rho_2\in S_{\tilde{\varepsilon}_1,T_0}$. It is easy to obtain from $\eqref{zzz1}$ that  for any $\rho\in S_{\tilde{\varepsilon}_1,T_0},$
\begin{align}\nonumber
&	||\mathcal{R}(\rho+A(\mathcal{N}(\rho)))||_{1+a,T_0}\lesssim_a T_0,~~
	\\&
	||\mathcal{I}_{f_0}(\rho+A(\mathcal{N}(\rho)))||_{1+a,T_0}\lesssim_a  ||| f_0|||_{1+a}.\label{zzz3}
\end{align}
Hence, since $\mathcal{J}(\rho)-\mathcal{I}_{f_0}(\mathfrak{g})=G*_{(t,x)}(\mathcal{I}_{f_0}(\mathfrak{g})+\mathcal{R}(\mathfrak{g})+A(U))+\mathcal{R}(\mathfrak{g})$,
\begin{equation}\label{zzz4}
		||\mathcal{J}(\rho)-\mathcal{I}_{f_0}(\mathfrak{g})||_{1+a,T_0} \lesssim T_0+(1+ ||| f_0|||_{1+a})\int_0^{T_0}||G(t)||_{L^1}dt\lesssim T_0(1+ ||| f_0|||_{1+a}),
\end{equation}
where we  use \cite[Theorem 3.6, (3.18)]{HNX1} in the last inequality. Hence
\begin{align}\nonumber
&	||\mathcal{J}(\rho)-\rho(0)||_{1+a,T_0}\leq ||\mathcal{J}(\rho)-\mathcal{I}_{f_0}(\mathfrak{g})||_{1+a,T_0}+	||(\mathcal{I}_{f_0}-\mathcal{I}_{f_0^\kappa})(\mathfrak{g})||_{1+a,T_0}\\&\nonumber+||\mathcal{I}_{f_0^\kappa}(\mathfrak{g})-\mathcal{I}_{f_0^\kappa}(0)||_{1+a,T_0}
	+||\mathcal{I}_{f_0^\kappa}(0)-\int_{\mathbb{R}^2}f_0^\kappa(x,v)dv||_{1+a,T_0}+||\int_{\mathbb{R}^2}(f_0^\kappa-f_0)(x,v)dv||_{1+a,T_0}\\&\overset{\eqref{zzz3}}\lesssim T_0(C(f_0^\kappa)+ ||| f_0|||_{1+a})+|||f_0-f_0^\kappa|||_{1+a}.\label{zzz12}
\end{align}
In particular, 
\begin{align}\nonumber
&	\sup_{\rho\in S_{\tilde{\varepsilon}_0,T_0} }	||\mathcal{I}_{f_0}(\rho+A(\mathcal{N}(\rho)))||_{a,T_0}\lesssim	T_0(C(f_0^\kappa)+ ||| f_0|||_{1+a})+|||f_0-f_0^\kappa|||_{1+a}+||\rho(0)||_{1+a,T_0}\\&\quad\quad
\lesssim T_0(C(f_0^\kappa)+ ||| f_0|||_{1+a})+|||f_0-f_0^\kappa|||_{1+a}+	 \sum_{p=1,\infty}(	||\rho(0)||_{\dot B^{a}_{p,\infty}}+||\rho(0)||_{L^p}).\label{zzz13}
\end{align}
Moreover, as \cite[Proof of Theorem 2.2]{HNX1}, we get from \eqref{zzz1}, \eqref{zzz2} and Lemma  \ref{lem0} that 
\begin{equation}\label{zzz14}	||\mathcal{J}(\rho_1)-\mathcal{J}(\rho_2)||_{a,T_0}\lesssim T_0(1+	||| f_0|||_{1+a})	||\rho_1-\rho_2||_{a,T_0},
\end{equation}
for any $\rho_1,\rho_2\in S_{\tilde{\varepsilon}_1,T_0}$. 
Thanks to \eqref{zzz13}, \eqref{zzz14}, \eqref{f_0condition} and the fixed-point theorem, we obtain that $\mathcal{J}$ has a unique fixed  point $\rho$ in $S_{\tilde{\varepsilon}_1,T_0}$ satisfying \eqref{zzz8} for some $T_0\in (0,1)$, $\tilde{\varepsilon}_1\in (0,\tilde{\varepsilon}_0)$ small enough. Clearly, \eqref{zzz77} follows from \eqref{zzz12}.
 The proof of the Proposition \eqref{propolocal} is complete.

	\end{document}